\newtheorem{theorem}{Theorem}
\newtheorem{definition}{Definition}
\newtheorem{example}[theorem]{Example}
\newtheorem{lemma}[theorem]{Lemma}
\newtheorem{proposition}[theorem]{Proposition}
\newtheorem{remark}[theorem]{Remark}
\def\P{\mathbb P}
\def\E{\mathbb E}
\def\R{\mathbb R}
\begin{document}

\title{Strong uniqueness for stochastic evolution equations with
   unbounded measurable drift term
   }

 \author{G. Da Prato \footnote{E-mail: g.daprato@sns.it}\\
{\small Scuola Normale Superiore}
  \\ {\small Piazza dei Cavalieri 7, 56126 Pisa,
 Italy }
  \\ F. Flandoli
  \footnote{E-mail: flandoli@dma.unipi.it}
  \\ {\small  Dipartimento di Matematica Applicata
``U. Dini''} \\ {\small  Universit\`a di Pisa, Italy }\\
  E. Priola
  \footnote{
E-mail:  enrico.priola@unito.it}
 \\ {\small Dipartimento di Matematica ``G. Peano'', Universit\`a di Torino}
 \\ {\small via Carlo Alberto 10,   Torino,  Italy} \\
 M. R\"{o}ckner
 \footnote{Research supported by the DFG through IRTG 1132 and CRC 701
  and the I. Newton Institute, Cambridge, UK.
  E-mail: roeckner@mathematik.uni-bielefeld.de }
\\
{\small Faculty of Mathematics, Bielefeld University}
\\{\small 33501 Bielefeld, Germany}}

\maketitle

\noindent \textbf{Abstract}
 We consider stochastic evolution equations in Hilbert spaces with merely
measurable and locally bounded drift term $B$ and cylindrical Wiener noise.
 We prove pathwise (hence strong) uniqueness in the class of  global  solutions.
 This paper   extends our previous paper
 \cite{DFPR} which generalized Veretennikov's fundamental result  to infinite
dimensions assuming boundedness of the drift term.
  As in \cite{DFPR}  pathwise uniqueness holds for a large class, but not for every
initial condition.
 We also include an application of our result to prove
  existence of strong solutions when the drift   $B$
is only measurable, locally bounded and   grows more than linearly.

{\vskip 4mm }
 \noindent {\bf Key words:} Pathwise uniqueness,
 stochastic PDEs,  locally bounded measurable drift term,
  strong mild solutions

{\vskip 3mm }
 \noindent {\bf Mathematics  Subject Classification}
 35R60, 60H15

\section{Introduction}

We consider the following abstract stochastic differential equation
 in a
separable Hilbert space $H$
\begin{equation} \label{uno}
dX_{t}=(AX_{t}+B(X_{t}))dt+dW_{t},\;\; t \ge 0,\qquad X_{0}=x\in H,
\end{equation}
where $A:D(A)\subset H\rightarrow H$ is self-adjoint, negative
definite and such that $(-A)^{-1 + \delta}$, for some $\delta \in
(0,1)$,
  is
of trace class,
  $B:H\rightarrow H$ and $W=(W_{t})$ is a cylindrical Wiener process.
In \cite{DFPR} under the assumption that $B$ is Borel measurable and (globally) bounded we prove pathwise uniqueness of solutions to \eqref{uno}. A natural generalization is to extend it to the case where  we only assume that $B$ is \textit{Borel measurable and locally bounded} { (i.e., bounded on balls):}
\begin{equation} \label{bloc}
B\in {B_{b, loc}}(H,H).
\end{equation}
In this paper we prove that assuming \eqref{bloc}  pathwise uniqueness
holds  for $\mu$-a.e. initial condition
 $x$  in the class of  {  global mild solutions} to \eqref{uno}.
   Here $\mu $ denotes the Gaussian measure which is  invariant
 for the Ornstein-Uhlenbeck process $Z= (Z_t)$ which solves \eqref{uno} when
 $B=0$ (see Section 1.1 for more details).

In other words if  for some initial condition $x \in H$, $\mu$-a.e., there exists  a  solution for \eqref{uno} on some filtered probability space $\left(
\Omega, {\mathcal F},
 ({\mathcal F}_{t}), \P \right)$ with
  a
cylindrical $({\mathcal F}_{t})$-Wiener process $W$
then our main result shows that this solution is pathwise  unique.
 This is in  particular the case when
\begin{align} \label{atmost}
B \; \text{is measurable and at most of linear growth}
\end{align}
(i.e., $B$ is measurable and there exists $a,b \ge 0$ such that $|B(x)| \le a + b|x|$, $x \in H$), because then
 existence of weak mild solutions  is
 well-known (see Chapter 10 in \cite{DZ}, \cite{Fe, GG}   and also Appendix in \cite{DFPR}).  Moreover, under condition \eqref{atmost},
 the unique law of any mild solution $X^x$ is equivalent to the law of the Ornstein-Uhlenbeck process starting at $x$  (corresponding to $B=0$).

By our main result, using   a generalization of  the Yamada-Watanabe theorem (see \cite{LR,O}), one deduces that under \eqref{atmost} equation \eqref{uno}
has a unique {  strong mild solution,}  for $\mu$-a.e.
 $x \in H$,  generalizing A. Veretennikov's seminal result \cite{Ver}
 in the case $H = \R^d$ (see also \cite{Fedrizzi, FF, Gyongy-Krylov,
 GyongyMartinez, KR05, tre, Za, Zv74}) to infinite dimensions.

 In Section 4 we  generalize this by relaxing assumption \eqref{atmost}. We
     prove existence of   strong mild solutions,
  starting from  $\mu$-a.e.   initial condition $x \in H$,
   when $B \in {B_{b, loc}}(H,H)$ and moreover
  there
exist $C>0$, $p>0$, such that
\begin{equation} \label{diss}
\left\langle B\left(  y+z\right)  ,y\right\rangle \leq C\big(\left |
y\right |^{2}+ e^{p | z |}+1\big)
\end{equation}
for all $y,z\in H$ (see also Remark \ref{ci3}). Finally in Section 4.3 we show a possible extension of our result by considering local mild solutions.

In order to prove  \textit{pathwise uniqueness} for \eqref{uno} we will consider
bounded truncated drifts like
 \begin{align}\label{bou}
    B_N = B\, 1_{B(0,N)},\;\;\; N \ge 1,
\end{align}
where $B(0,N)$ is the open ball of center 0 and of radius $N$ and $1_{B(0,N)}$ is the indicator function of $B(0,N)$,
  by performing a suitable stopping time argument.
This argument is not straightforward since it must be also used in combination with the Ito-Tanaka trick from \cite{DFPR} (see also \cite{DF, Fla, FGP,   P, Ver}).

In addition in this paper we also simplify some arguments used in \cite{DFPR} in the case of $B \in B_b(H,H)$  (see, in particular, Lemma \ref{new11}).
Before stating our main result precisely, let us recall the following definition
 (cf. \cite{O} and \cite{LR}).
\begin{definition}
 Let $x \in H$.

\noindent (a)  We call  { \it weak mild solution}  to
(\ref{uno}) a tuple   $\left(
\Omega, {\mathcal F},
 ({\mathcal F}_{t}), \P, W, X\right) $, where $\left(
\Omega, {\mathcal F},
 ({\mathcal F}_{t}), \P \right)$ is a  filtered probability space
  on which it is defined a
 cylindrical $({\mathcal F}_{t})$-Wiener process $W$ and
 a continuous $({\mathcal F}_{t})$-adapted $H$-valued
process $X = (X_t) = (X_t)_{t \ge 0}$ such that, $\P$-a.s.,
\begin{equation}
X_{t}=e^{tA}x+\int_{0}^{t}e^{\left(  t-s\right)  A}B\left(
X_{s}\right)
ds+\int_{0}^{t}e^{\left(  t-s\right)  A}dW_{s},\;\;\; t \ge 0.\label{mild}
\end{equation}
(b) A weak mild solution $X$ which is $(\bar {\cal F}_t^W)$-adapted (here $(\bar{\cal F}_t^W)$ denotes the  completed  natural filtration of the cylindrical process $W$)
is called \textit{strong mild solution}.
\end{definition}
 We will
   often use  stopping times
\begin{equation}\label{stop1}
    \tau_N^X = \inf \{t \ge 0 \; :\; X_t \not \in B(0,N) \}
\end{equation}
($\tau_{N}^{X}= + \infty$ if the set is empty), $N \ge 1$.

 In our
main result we consider two mild solutions $X$  and $Y$, having  the same initial condition
$x \in H$, and solving  the same equation \eqref{uno} but   with possibly different drift terms,  respectively $B$ and $B' \in B_{b, loc}(H,H)$, i.e.,
\begin{equation} \label{ci71}
dX_{t}=(AX_{t}+B(X_{t}))dt+dW_{t},\;\; \;\;  X_0=x,
\end{equation}
\begin{equation} \label{ci72}
  dY_{t}=(AY_{t}+B'(Y_{t}))dt+dW_{t},\;\;\;\; Y_0=x.
\end{equation}
\begin{theorem}
\label{main theorem} Assume Hypothesis 1 (see Section 1.1) and let $\mu$ be the centered Gaussian measure on $H$ with covariance $Q = -\frac{1}{2}A^{-1}$.

Then for $\mu$-a.e.
 $x\in H$, if  $X$ and $Y$
  are two weak mild solutions, respectively of \eqref{ci71} and \eqref{ci72},
defined on the same filtered probability
  space $\left(
\Omega, {\mathcal F},
 ({\mathcal F}_{t}), \P \right)$  with the same cylindrical Wiener process $W$,
 and    if,  for some $N \ge 1$,
\begin{equation}\label{d9}
    B(x) = B'(x), \;\;\; x \in B(0,N),
\end{equation}
then,  $\P$-a.s.,
 \begin{equation} \label{rgg}
X_{t\wedge \tau_N^X \wedge \tau_N^Y} = Y_{t\wedge \wedge \tau_N^X \wedge \tau_N^Y },\;\; t \ge 0,
\end{equation}
and so $\tau_N^X = \tau_N^Y$, $\P$-a.s..
\end{theorem}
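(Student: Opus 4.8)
The plan is to run the It\^o--Tanaka change of variables of \cite{DFPR} \emph{localized} to the stopped processes. Put $\tau:=\tau_N^X\wedge\tau_N^Y$ and $B_N:=B\,1_{B(0,N)}$; by \eqref{d9} we also have $B_N=B'\,1_{B(0,N)}$, and since $X_s\in B(0,N)$ for $s<\tau_N^X$ and $Y_s\in B(0,N)$ for $s<\tau_N^Y$, up to their respective exit times both $X$ and $Y$ are mild solutions of \eqref{uno} with the \emph{single bounded Borel} drift $B_N$. Fix $\lambda>0$ large and let $u=u_\lambda\colon H\to H$ solve the (vector-valued) Kolmogorov equation $\lambda u-Lu-\langle B_N,Du\rangle=B_N$, where $L$ is the generator of the Ornstein--Uhlenbeck semigroup $R_t$ associated with $A$; for $\lambda$ large this is obtained by a convergent Neumann series from the OU resolvent, and by the regularity estimates recalled, and partly simplified, in Section~1.1 and in Lemma~\ref{new11} one has $u\in C^1_b(H,H)$ with $\|Du\|_\infty\le\tfrac12$, $u\in W^{2,p}(H,\mu)$ with the equation holding $\mu$-a.e., and $Du$ enjoys the extra Hilbert--Schmidt-type regularity needed below. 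In particular $\psi(x):=x+u(x)$ is a bi-Lipschitz homeomorphism of $H$.

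The core step is to apply the (mild) It\^o formula to $\psi(X_{t\wedge\tau})$ and $\psi(Y_{t\wedge\tau})$. Using the equation for $u$, the merely measurable drift $B_N$ cancels, and both transformed processes satisfy mild equations whose drift is the Lipschitz field $x\mapsto\lambda u(x)$ and whose (cylindrical) diffusion coefficient is $I+Du$ evaluated along the corresponding path; crucially, since $X$ and $Y$ start from the same $x$, the transformed processes have the \emph{same} initial value $x+u(x)$. Subtracting, the stochastic integrals driven by the common $W$ enter only through the Lipschitz difference $Du(X_{s\wedge\tau})-Du(Y_{s\wedge\tau})$, and using $\|e^{rA}\|\le1$ together with the smoothing of $e^{rA}$ (to turn the integrand into a Hilbert--Schmidt operator, so that the cylindrical stochastic integral is well defined) and the bi-Lipschitz property of $\psi$, one arrives at
\[
\E\,|X_{t\wedge\tau}-Y_{t\wedge\tau}|^2\;\le\;C_{N,\lambda}\int_0^t\E\,|X_{s\wedge\tau}-Y_{s\wedge\tau}|^2\,ds,\qquad t\ge0 .
\]
Gronwall's lemma gives $\E\,|X_{t\wedge\tau}-Y_{t\wedge\tau}|^2=0$ for all $t$, hence \eqref{rgg} after using path-continuity. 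Finally $\tau_N^X=\tau_N^Y$ $\P$-a.s.: on $\{\tau_N^X<\tau_N^Y\}$ one would get $|X_\tau|\ge N$ and $|Y_\tau|<N$ although $X_\tau=Y_\tau$ by \eqref{rgg}, and symmetrically, so both events are $\P$-null.

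The step I expect to be the main obstacle is precisely the one the introduction flags as ``not straightforward'': incorporating the stopping into the mild It\^o--Tanaka machinery. Because the convolution kernel $e^{(t-s)A}$ in \eqref{mild} links times on both sides of $\tau$, the stopped process is not literally a mild solution with drift $B_N$, so one cannot simply quote \cite{DFPR}; the computation must be done on the stopped processes directly, checking that it only uses the equation for $u$ on the open ball $B(0,N)$, where the law of $X_{s\wedge\tau}$ restricted to $\{s<\tau\}$ is absolutely continuous with respect to $\mu$ (so the $\mu$-a.e.\ validity of the Kolmogorov equation suffices), while point evaluations of $u$ at time $\tau$ are harmless since $u\in C^1_b$. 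A secondary issue, which is the source of the ``$\mu$-a.e.\ $x$'' in the statement, is that applying It\^o's formula to the merely Sobolev function $u$ for small times forces the initial point $x$ to be a $\mu$-Lebesgue point of the data; since $B_N$ depends on $N$, one fixes the $\mu$-null exceptional set once and for all as the countable union over $N$ of the corresponding bad sets. The remaining work is infinite-dimensional bookkeeping (finiteness of the traces that occur, the mild It\^o formula, and the Hilbert--Schmidt bound for $e^{rA}\bigl(Du(x)-Du(y)\bigr)$), for which the lemmas of Section~1.1 are tailored.
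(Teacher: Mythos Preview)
Your outline has a genuine gap at the step where you pass from the transformed equations to the Gronwall inequality
\[
\E\,|X_{t\wedge\tau}-Y_{t\wedge\tau}|^2\;\le\;C_{N,\lambda}\int_0^t\E\,|X_{s\wedge\tau}-Y_{s\wedge\tau}|^2\,ds.
\]
This would require that $x\mapsto Du_N(x)$ be Lipschitz, i.e.\ that $D^2u_N$ be bounded. But the regularity you have is only $u_N\in C^1_b(H,H)\cap W^{2,p}(H,\mu)$; the second derivative $D^2u_N$ is merely in $L^p(\mu)$, not in $L^\infty$, and there is no reason for it to be bounded when $B_N$ is only Borel and bounded. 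Writing $Du_N(X_s)-Du_N(Y_s)=\int_0^1 D^2u_N(Z^r_s)[X_s-Y_s]\,dr$ with $Z^r_s=rX_s+(1-r)Y_s$, the constant in your inequality would have to be the random, unbounded quantity $\sup_{s,r}\|D^2u_N(Z^r_s)\|$, and the argument collapses. Smoothing by $e^{rA}$ helps only with Hilbert--Schmidt summability in the target index, not with the Lipschitz constant of $Du_N$.

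The paper's proof confronts exactly this point. After the same subtraction it does \emph{not} obtain a clean Gronwall bound; instead (see \eqref{serve}--\eqref{serve2}) it gets an inequality containing the random factor
\[
\Lambda_T=\int_0^T\!\!\int_0^t 1_{[0,\tau]}(s)\int_0^1\Big(\sum_{n\ge1}\lambda_n^{-(1-\delta)}\|D^2u_N^{(n)}(Z^r_s)\|^2\Big)^{\gamma}dr\,ds,\qquad \gamma=q/2,\ q>4,
\]
and introduces a further stopping time $\bar\tau_R^{x,N}$ to cap $\Lambda_T$ by $R$. Only then does a Gronwall-type argument close. The remaining work is Proposition~\ref{stop}: showing, via Girsanov applied to an auxiliary process $Z^{r,N}$ built from the \emph{stopped} $X$ and $Y$, that $\E[S^x_{T\wedge\tau_N^X\wedge\tau_N^Y}]<\infty$ for $\mu$-a.e.\ $x$, hence $\bar\tau_R^{x,N}\uparrow T\wedge\tau_N^X\wedge\tau_N^Y$. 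This is the true origin of the $\mu$-a.e.\ restriction on $x$, not the It\^o formula itself (Theorem~\ref{p4} holds for \emph{every} $x\in H$). Your diagnosis that the exceptional set comes from ``$x$ being a $\mu$-Lebesgue point for small-time It\^o'' is therefore off target; the restriction enters because \eqref{cruciale} (finiteness of an integral of $\|D^2u_N\|^{2\gamma}$ against the OU transition kernels) can only be verified for $\mu$-a.e.\ initial point.
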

Above we restrict to $W$ which are cylindrical with respect to the eigenbasis of $A$ (see Section 1.1 for details).
 Clearly if $B=B'$ the result implies that, $\P$-a.s.,
\begin{equation} \label{ci88}
X_t = Y_t,\;\; t \ge 0.
\end{equation}
Indeed  using that $\tau_N^X \uparrow + \infty$
 and $\tau_N^Y \uparrow + \infty$ as $N \to \infty$ (because $X$ and $Y$ are both global solutions) we deduce easily \eqref{ci88} from \eqref{rgg}.

 The proof of Theorem \ref{main theorem}, performed in Section \ref{section proof}, uses a truncation argument and regularity
results for elliptic equations in Hilbert spaces involving truncated drift terms $B_N$ (cf. \eqref{bou}). Such regularity results are  given in Section 2,
where we  also establish an It\^o type formula involving $u(X_t)$
with $u$  in some Sobolev space associated to $\mu$ (see Theorem \ref{p4}).
 In comparison
with  \cite{DFPR}
 to prove such an It\^o type formula  we use a new analytic lemma (see Lemma \ref{new11}).

There are several other quite essential differences in comparison with \cite{DFPR}  in our proof. We refer to Remarks \ref{mi1} and \ref{mi2} for details.

\subsection{Assumptions and preliminaries}

As in \cite{DFPR} we  consider a real separable Hilbert space $H$  and  denote its
norm and inner product  by $\left\vert \, \cdot \, \right\vert $ and
$\left\langle \cdot \, , \cdot \right\rangle $ respectively. Following
\cite{D}, \cite{DZ}, \cite{DZ1}, \cite{PrevRoeckner}, we assume

\vskip 3mm

\noindent {\bf Hypothesis  1}
$A:D(A)\subset H\to H$ is a negative
 definite
self-adjoint operator and
 $(-A)^{-1 + \delta}$, for some $\delta \in (0,1)$,
is of trace class.

\begin{remark} {\em
Our uniqueness result continues to hold under the following  more general assumption:
$A:D(A)\subset H\to H$ is self-adjoint and
there exists $\omega \in \R$ such that
 $(A - \omega)$ is negative definite and
  $(\omega -A)^{-1 + \delta}$, for some $\delta \in (0,1)$,
is of trace class.
 Indeed if we  write  equation \eqref{uno} in the form
$$
dX_{t}=(AX_{t} - \omega X_t) dt \,   + \, (\omega X_t + B(X_{t}))dt+dW_{t},\qquad X_{0}=x\in H,
$$
then  the linear operator $(A - \omega I)$  verifies Hypothesis 1 and the drift  $\omega I + B$ continues to satisfy \eqref{bloc}.
}
\end{remark}

Since $A^{-1}$ is compact, there exists an orthonormal basis $(e_k)$
in $H$ and a sequence of positive numbers $(\lambda_k)$ such that
\begin{equation}
\label{e1a}
Ae_k=-\lambda_k e_k,\quad k\in\mathbb N.
\end{equation}
  Recall that $A$ generates an analytic semigroup $e^{tA}$ on $H$
such that $e^{tA} e_k = e^{- \lambda_k t } e_k$.

 From now on until and including Section 3 we fix $(\Omega,{\cal F}, ({\cal F}_t), \P)$, $W$, $X$ and $Y$ as in the assertion of Theorem \ref{main theorem}.
 As said before we will consider a
cylindrical Wiener process $(W_t)$ with respect to the previous basis
 $(e_k)$. The process $(W_t)$ is formally given by ``$W_t = \sum_{k \ge 1}
  \beta_k(t) e_k$'' where $(\beta_k(t))$ are independent
  one-dimensional Wiener processes (see \cite{DZ} for more details).

By $R_t$ we denote the Ornstein-Uhlenbeck semigroup in $B_b(H)$ (the
Banach space of all Borel and bounded real functions endowed with the
essential supremum norm $\| \, \cdot\, \|_0$)   defined as
\begin{equation}
\label{e1}
R_t f (x)=\int_H f(y)\, N(e^{tA}x,Q_t)(dy),\quad f \in B_b(H), \; x \in H,
\end{equation}
where $N(e^{tA}x,Q_t)$ denotes the Gaussian measure in $H$ of mean
$e^{tA}x$ and covariance operator $Q_t$ given by
\begin{equation}
\label{e2}
Q_t \, = \,  -\frac{1}{2}\;A^{-1}(I-e^{2tA}), \; \quad t\ge 0.
\end{equation}
We  note that $R_t$ has the  unique invariant measure $\mu:=N(0,Q)$
where $Q=-\frac12\;A^{-1}$. Moreover, since under the previous hypotheses, the Ornstein-Uhlenbeck semigroup is strong Feller and
irreducible we have by Doob's theorem that, for any $t>0$, $x \in
H$, the measures $N(e^{tA}x,Q_t)$ and $ \mu$ are equivalent; see \cite{DZ1}. On the other hand, our assumption that $(-A)^{-1+
\delta}$ is trace class guarantees that the OU process
\begin{equation} \label{ou11}
Z_t = Z(t,x)= e^{tA} x + \int_0^t e^{(t-s)A} dW_s
\end{equation}
admits a continuous $H$-valued version.

\smallskip
  If $H$ and $M$ are real separable Hilbert spaces,  the Banach space
  $L^p(H, \mu;$ $ M)$, $p \ge 1$, is defined to consist of equivalent
classes of measurable functions $\varphi: H \to M$ such that $\int_H
|\varphi(x)|_M^p \, \mu(dx) < +\infty $ (if $M = \R$ we set $L^p(H, \mu; \R)=
L^p(H, \mu)$). We will also use the notation $L^p(\mu)$ instead of $L^p(H, \mu, M) $ when no confusion may arise.

The semigroup $R_t$ can be uniquely extended to  a strongly
continuous semigroup of contractions   on $L^p(H,\mu)$, $p \ge 1$,
which we still denote by $R_t$, whereas we  denote by $L_p$ (or $L$
when no confusion may arise) its infinitesimal generator, which is
 defined   on smooth functions $f$ as
$$
L f(x) = \frac{1}{2} \mbox{Tr}(D^2 f(x)) +
 \langle Ax , D f (x)\rangle,
$$
where $D f(x)$ and $D^2 f(x)$ denote, respectively, the
first and second Fr\'echet derivatives of $f$ at $x \in H$.
For real Banach spaces $F$ and $G$ we denote by $C^{k}_b (F,G)$, $k \ge
1$, the Banach space of all functions $f: F \to G$ which are bounded and Fr\'echet differentiable on $F$ up to  order $k \ge 1$ with
all derivatives bounded and continuous on $H$. We also set $C^{k}_b (F, \R)
= C^{k}_b (F)$.

Following  \cite{DZ1}, for any $f \in B_b(H)$ and any $t>0$,
one has
 $R_t f \in C^{\infty}_b(H)  =  \cap _{k \ge 1} C^k_b(H)$. Moreover,
\begin{equation}
\label{e3}
\langle DR_t f (x),k
 \rangle=\int_H \langle
  \Lambda_t k,Q_t^{-\frac12} y\rangle f(e^{tA}x+y)N(0,Q_t)(dy),
  \;\; k \in H, \; x \in H,
\end{equation}
 where $Q_t$ is as defined  in \eqref{e2},
\begin{equation}
\label{e4} \Lambda_t=Q_t^{-1/2}e^{tA}=\sqrt 2\;
(-A)^{1/2}e^{tA}(I-e^{2tA})^{-1/2}
\end{equation}
  and $y \mapsto \langle
  \Lambda_t k,Q_t^{-\frac12} y\rangle$ is a centered
   Gaussian random variable
 under $\mu_t = N(0, Q_t)$ with variance $ |\Lambda_t k|^2$,
 for any $t>0$ (cf. Theorem 6.2.2 in \cite{DZ}).   Since
$$
\Lambda_t e_k =\sqrt
2\;(\lambda_k)^{1/2}e^{-t\lambda_k}(1-e^{-2t\lambda_k})^{-1/2} e_k, \;\;\; k \ge 1,
$$
we see that  there exists
$C_0'>0$ such that
$
\|\Lambda_t \|_{}\le C_0' \,
t^{-\frac12}.
$

In the sequel $\| \cdot \|$  denotes the {\sl Hilbert-Schmidt
norm}; on the other hand $\| \cdot\|_{\mathcal L}$ indicates the
{\sl operator norm.}
By \eqref{e3} we deduce, for any $\varphi \in B_b(H),$
\begin{equation}
\label{e6} \sup_{x \in H}
| DR_t\varphi(x)|
 = \| DR_t\varphi \|_0 \le
C_0 \, t^{-\frac12}\|\varphi\|_0, \;\; t>0,
\end{equation}
which by taking the Laplace transform yields
$$ \| D \, (\lambda - L_2)^{-1}
\varphi\|_{0}\le
\frac{C_{0}}{\lambda^{\frac12}}\;\|\varphi\|_{0},\;\;\; \lambda>0.
$$
Similarly, we find
$$\| DR_t\varphi\|_{L^2(\mu)}\le C_0
t^{-\frac12}\;\|\varphi\|_{L^2(\mu)}
$$
and
$$
\| D \, (\lambda - L_2)^{-1}\varphi
\|_{L^2(\mu)}\le \frac{C_{0}}
{\lambda^{\frac12}}\;\|\varphi\|_{L^2(\mu)}.
$$
  Recall that
  the Sobolev space $W^{2,p}(H, \mu)$, $p \ge 1$, is defined in
 Section 3 of \cite{CG1}
  as the completion of a suitable set of smooth functions
  endowed with  the  Sobolev norm
  (cf. also Section 9.2 in \cite{DZ}
  for the case $p=2$ and \cite{S}).
   Under our above hypotheses,
  the following result is proved in
  Section 10.2.1 of \cite{DZ1}.
 \begin{theorem}
\label{t1}
Let $\lambda>0$,    $f\in L^2(H,\mu)$ and let $v \in D(L_2)$ be the solution of the equation
$$
\lambda v -L_2 v=f.
$$
Then $v\in W^{2,2}(H,\mu)$, $(-A)^{1/2}Dv\in
L^{2}(H,\mu;H)$ and, moreover, there exists  a constant $C(\lambda)$
 such that
$$
 \|v\|_{L^{2}(\mu)} +  \Big ( \int_H \|D^2 v
(x)\|_{}^2 \, \mu (dx)\Big)^{1/2} + \,
\|(-A)^{1/2}Dv\|_{L^{2}(\mu)}\le C\, \|f\|_{L^2(\mu)}.
$$
 \end{theorem}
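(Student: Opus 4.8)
The plan is to prove the a priori estimate on a dense core of smooth functions and then to conclude by density, using that $W^{2,2}(H,\mu)$ is by definition a completion. I would fix the core $\mathcal E$ consisting of the linear span of the eigenfunctions of $L_2$ (finite products of one-dimensional Hermite polynomials in the coordinates $x_k = \langle x, e_k\rangle$); these are smooth, depend on finitely many coordinates, and form a core for $L_2$. On $\mathcal E$ the operator reads $L f = \tfrac12 \sum_k D_{kk} f - \sum_k \lambda_k x_k D_k f$, every sum below is finite, and it suffices to prove the estimate for $v \in \mathcal E$ and $f := \lambda v - L_2 v \in \mathcal E$, since such $f$ fill a dense subset of $L^2(H,\mu)$. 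Throughout I write $D_k = \langle D\,\cdot\,, e_k\rangle$.

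The key tool is integration by parts with respect to $\mu$, namely $\int_H D_k\phi\, d\mu = 2\lambda_k \int_H x_k\,\phi\, d\mu$, which yields the Dirichlet-form identity
\[
-\int_H g\, L h\, d\mu = \tfrac12 \int_H \langle Dg, Dh\rangle\, d\mu, \qquad g,h \in \mathcal E
\]
(in particular $L$ is symmetric and $\le 0$ on $\mathcal E$). Testing $\lambda v - L v = f$ against $v$ gives
\[
\lambda \|v\|_{L^2(\mu)}^2 + \tfrac12 \|Dv\|_{L^2(\mu)}^2 = \int_H f v\, d\mu \le \|f\|_{L^2(\mu)} \|v\|_{L^2(\mu)},
\]
which bounds $\|v\|_{L^2(\mu)}$ and $\|Dv\|_{L^2(\mu)}$ in terms of $\|f\|_{L^2(\mu)}$ and shows $\int_H f v\, d\mu \ge 0$.

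For the second-order bound I would differentiate the equation. A direct computation on $\mathcal E$ gives the commutation relation $L D_k = D_k L + \lambda_k D_k$, so applying $D_k$ to $\lambda v - L v = f$ shows that $w_k := D_k v$ solves $(\lambda + \lambda_k) w_k - L w_k = D_k f$. Testing this against $w_k$ exactly as in the first step and summing over $k$ yields
\[
\sum_k (\lambda + \lambda_k) \|D_k v\|_{L^2(\mu)}^2 + \tfrac12 \int_H \|D^2 v\|^2\, d\mu = \int_H \langle Df, Dv\rangle\, d\mu,
\]
where I used $\sum_{j,k} \int_H |D_{jk} v|^2\, d\mu = \int_H \|D^2 v\|^2\, d\mu$ and $\sum_k \lambda_k |D_k v|^2 = |(-A)^{1/2} Dv|^2$. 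The right-hand side is handled by the same integration-by-parts identity,
\[
\int_H \langle Df, Dv\rangle\, d\mu = -2\int_H f\, L v\, d\mu = -2\lambda \int_H f v\, d\mu + 2\|f\|_{L^2(\mu)}^2 \le 2\|f\|_{L^2(\mu)}^2,
\]
using $L v = \lambda v - f$ and $\int_H f v\, d\mu \ge 0$ from the first step. Hence both $\int_H \|D^2 v\|^2\, d\mu$ and $\|(-A)^{1/2} Dv\|_{L^2(\mu)}^2$ are bounded by $2\|f\|_{L^2(\mu)}^2$, which together with the first step gives the claimed inequality with some $C=C(\lambda)$.

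To remove the smoothness assumption, for arbitrary $f \in L^2(H,\mu)$ I would take $f_n \in (\lambda - L_2)(\mathcal E)$ with $f_n \to f$ in $L^2(H,\mu)$; then $v_n = (\lambda - L_2)^{-1} f_n \to v$ in $L^2(H,\mu)$, and the uniform estimates above make $(v_n)$ Cauchy in the $W^{2,2}(H,\mu)$-norm and $((-A)^{1/2} Dv_n)$ Cauchy in $L^2(H,\mu;H)$, so $v \in W^{2,2}(H,\mu)$, $(-A)^{1/2} Dv \in L^2(H,\mu;H)$, and the estimate passes to the limit. I expect the genuinely delicate points to be the infinite-dimensional bookkeeping: verifying that $\mathcal E$ is a core so that the resolvent maps a dense set of $f$'s back into $\mathcal E$, and controlling the infinite sum $\sum_{j,k}\int_H |D_{jk} v|^2\, d\mu$ so that $D^2 v$ is genuinely Hilbert–Schmidt $\mu$-a.e. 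Working first on the finitely supported smooth core, where every sum is finite and all integrations by parts and termwise differentiations are elementary, is precisely what legitimizes these manipulations before the limit is taken.
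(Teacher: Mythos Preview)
The paper does not give its own proof of this theorem: it simply records the statement and cites Section~10.2.1 of \cite{DZ1}. Your argument is correct and is essentially the standard proof found in that reference: work on the Hermite core $\mathcal E$ of $L_2$, use the Dirichlet-form identity for the first-order estimate, exploit the commutation relation $LD_k=D_kL+\lambda_kD_k$ to upgrade to second order, and then close by density. Two small remarks. First, the density step is particularly clean here because the elements of $\mathcal E$ are genuine eigenfunctions of $L_2$, so $(\lambda-L_2)(\mathcal E)=\mathcal E$ exactly; this answers the concern you raise about $\mathcal E$ being a core and about the resolvent mapping a dense set of $f$'s back into $\mathcal E$. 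Second, your handling of the cross term $\int_H\langle Df,Dv\rangle\,d\mu$ via $-2\int_H f\,Lv\,d\mu$ and the sign of $\int_H fv\,d\mu$ is the efficient way to get the constant independent of the spectral gap; this is the same trick used in \cite{DZ1}.
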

The following  extension  to $L^p(\mu)$, $p>1$, can be found in
Section 3 of \cite{CG1} (see also \cite{CG} and \cite{MV}).
 \begin{theorem}
\label{t1cg}
Let $\lambda>0$,    $f\in L^p(H,\mu)$ and let $v\in D(L_p)$ be the solution of the equation
$$
\lambda v-L_p  v=f.
$$
Then $v\in W^{2,p}(H,\mu)$, $(-A)^{1/2}Dv\in
L^{p}(H,\mu;H)$ and there exists  a constant $C= C(\lambda, p)$ such
that
$$
\|v\|_{L^p(\mu)} + \Big ( \int_H \|D^2 v
(x)\|_{}^p \, \mu (dx)\Big)^{1/p}
+\|(-A)^{1/2}Dv\|_{L^{p}(\mu)}\le C\|f\|_{L^p(\mu)}.
$$
\end{theorem}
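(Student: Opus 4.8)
The plan is to read this estimate as the infinite-dimensional, $L^p$-version of the classical Calderón--Zygmund bound $\|D^2(\lambda-\Delta)^{-1}f\|_{L^p}\le C\|f\|_{L^p}$, that is, as a maximal $L^p$-regularity statement for the Ornstein--Uhlenbeck operator, equivalent to the $L^p(\mu)$-boundedness of the second-order Riesz transforms $D_iD_j(\lambda-L_p)^{-1}$ and of the weighted first-order transform $(-A)^{1/2}D(\lambda-L_p)^{-1}$. The zeroth-order term is immediate: since $R_t$ is a contraction semigroup on $L^p(H,\mu)$, so is its resolvent, whence $\|v\|_{L^p(\mu)}=\|(\lambda-L_p)^{-1}f\|_{L^p(\mu)}\le\lambda^{-1}\|f\|_{L^p(\mu)}$. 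I would fix a dense class of test functions $f$ --- finite linear combinations of exponentials $e^{i\langle h,\cdot\rangle}$, or Wiener-chaos polynomials, all depending on finitely many coordinates in the eigenbasis $(e_k)$ of $A$ --- on which $v=(\lambda-L_p)^{-1}f$ is smooth and cylindrical and every object below is classically defined; the final inequality then passes to all of $L^p(\mu)$ by density together with the closedness of $D^2$, $(-A)^{1/2}D$ and $L_p$.

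On cylindrical $f$ depending only on $x_1,\dots,x_n$, the generator reduces to the finite-dimensional Ornstein--Uhlenbeck operator $L_n=\tfrac12\sum_{k\le n}\partial_{kk}-\sum_{k\le n}\lambda_k x_k\partial_k$, with invariant Gaussian $\mu_n=N(0,Q_n)$, $Q_n=\tfrac12\,\mathrm{diag}(\lambda_k^{-1})_{k\le n}$, and $v$ remains cylindrical. The whole problem is then to prove the a priori bound
\[
\Big(\int_{\R^n}\|D^2v\|^p\,d\mu_n\Big)^{1/p}+\|(-A)^{1/2}Dv\|_{L^p(\mu_n)}\le C(\lambda,p)\,\|f\|_{L^p(\mu_n)}
\]
with a constant $C(\lambda,p)$ \emph{independent of $n$ and of the eigenvalues $\lambda_k$}. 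The infinite-dimensional statement is recovered by letting $n\to\infty$, using that cylindrical functions are dense in $L^p(H,\mu)$, that $\mu_n$ is the $n$-dimensional marginal of $\mu$, and that $L_n\to L_p$ in the strong resolvent sense; note that $(-A)^{1/2}$ acts diagonally as $\lambda_k^{1/2}$ on $e_k$, so the weighted gradient passes to the limit by the same argument.

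The crux is this dimension-free bound, which is exactly the content of Meyer's inequalities for the Hermite/Ornstein--Uhlenbeck semigroup, and I would establish it by Littlewood--Paley--Stein theory. Introduce the square function $g(f)(x)=(\int_0^\infty\|t\,D^2R_tf(x)\|^2\,\tfrac{dt}{t})^{1/2}$ and a similarly normalized one controlling $(-A)^{1/2}DR_tf$, whose pointwise kernels are obtained by differentiating the representation \eqref{e3} once more: they are second-order Hermite-type Gaussian integrals, the $(-A)^{1/2}$ factor in the weighted transform being supplied directly by the $(-A)^{1/2}$ appearing in $\Lambda_t$ in \eqref{e4}, with a priori sizes controlled by \eqref{e6}. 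Then (i) the Wiener-chaos/spectral decomposition gives the two-sided $L^2$ identity $\|g(f)\|_{L^2(\mu_n)}\simeq\|f\|_{L^2(\mu_n)}$ with constants free of $n$ and of the $\lambda_k$ --- this is the $L^2$ computation underlying Theorem \ref{t1}; (ii) Stein's Littlewood--Paley--Stein inequality for the symmetric contraction semigroup $R_t$ upgrades the $L^2$ identity to $c_p\|f\|_{L^p(\mu_n)}\le\|g(f)\|_{L^p(\mu_n)}\le C_p\|f\|_{L^p(\mu_n)}$ for $1<p<\infty$, with $c_p,C_p$ depending only on $p$; (iii) expressing $D^2v$ and $(-A)^{1/2}Dv$ through the subordination of the resolvent to the semigroup and dominating them by the square functions then yields the desired $L^p(\mu_n)$ bounds uniformly in $n$.

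I expect the main obstacle to be precisely steps (ii)--(iii): guaranteeing that the $L^p$ constants depend neither on the dimension $n$ nor on the growing, unbounded eigenvalues $\lambda_k$. This dimension- and spectrum-freeness is the analytic heart of Meyer's inequalities, and the reason I favor the Littlewood--Paley--Stein route is that Stein's inequality holds for an arbitrary symmetric Markovian contraction semigroup with a universal constant, so the $\lambda_k$ never enter the $L^2\to L^p$ passage and remain confined to the explicit, uniformly bounded multipliers produced by \eqref{e3}--\eqref{e4}. Once these bounds are in hand the limit $n\to\infty$ is routine and produces $v\in W^{2,p}(H,\mu)$, $(-A)^{1/2}Dv\in L^p(H,\mu;H)$ together with the stated inequality. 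This follows the line of \cite{CG1} (see also \cite{CG,MV}).
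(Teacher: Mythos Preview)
The paper does not give its own proof of this theorem: it is quoted from the literature, with the sentence ``The following extension to $L^p(\mu)$, $p>1$, can be found in Section 3 of \cite{CG1} (see also \cite{CG} and \cite{MV}).'' Your sketch --- cylindrical reduction, dimension-free Meyer inequalities via Littlewood--Paley--Stein theory for the symmetric Ornstein--Uhlenbeck semigroup, then passage to the limit --- is precisely the strategy of those references, and you even cite them at the end; so there is nothing to compare against and your outline is aligned with the source the paper relies on.
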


\section{Analytic results and an  It\^o type formula}

\subsection{ Existence and uniqueness for the Kolmogorov
equation when  $B$ is bounded}

Here we consider  the equation
\begin{equation}
\label{e12} \lambda u-L_2 \, u-\langle B,Du   \rangle=f,
\end{equation}
where $\lambda>0$, $f\in B_b(H)$ and $B\in B_b(H,H)$
 (i.e., $B: H \to H$ is Borel and bounded).

\begin{remark} {\rm Since
   the corresponding Dirichlet form
$$
{\mathcal E}(f, g) := \int_H \langle Df , Dg \rangle
 d\mu - \int_H \langle B, Df \rangle
 g
  \, d\mu  + \lambda \int_H f g
 \, d\mu,
$$
$f, \,   g \in W^{1,2}(\mu)$, is weakly sectorial for $\lambda$ big
enough,
  it follows by Chap. I and Subsection 3e in Chap. II of \cite{MR} that \eqref{e12} has a unique solution in $D(L_2)$. However, we  need
 more regularity for $u$.
}
\end{remark}
We  recall a result from  \cite{DFPR}.
\begin{proposition}
\label{p2} Let $\lambda\ge\lambda_0$, where
 \begin{equation} \label{dt1}
\lambda_0:=4\|B\|_0^2 C^2_{0}.
\end{equation}
Then there is a unique solution $u \in D(L_2)$ of \eqref{e12} given
by
$$
 u=  u_{\lambda}=(\lambda-L_2)^{-1}(I-T_\lambda)^{-1} f,
$$
where
$$
 T_\lambda g:=\langle B,D(\lambda-L_2)^{-1} g
\rangle.
$$
 Moreover, $u \in C^1_b (H)$ with
$$
\|u\|_0\le 2\|f\|_0,\quad\| Du\|_0\le
\frac{2C_{1,0}}{\lambda^{\frac12}}\; \|f\|_0,
$$
and,  for any $p \ge 2$, $u \in W^{2,p}(H, \mu)$ and,
 for some $ \, c=c(\lambda, p , \| B\|_0)$,
$$
  \int_H \|D^2 u (x)\|_{}^p \, \mu (dx)
\le c \int_H |f(x)|_{}^p \, \mu (dx).
$$
\end{proposition}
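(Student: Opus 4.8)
The plan is to solve \eqref{e12} by treating the drift term $\langle B,Du\rangle$ as a perturbation of the Ornstein-Uhlenbeck operator $L_2$, via a Neumann series in the operator $T_\lambda$, and then to upgrade regularity using the smoothing estimates for $R_t$ recorded above. The first step is to see that $T_\lambda g=\langle B,D(\lambda-L_2)^{-1}g\rangle$ is a bounded operator on $B_b(H)$ of small norm for large $\lambda$. For $g\in B_b(H)$ one has $(\lambda-L_2)^{-1}g=\int_0^\infty e^{-\lambda t}R_tg\,dt$; each $R_tg$ is in $C^\infty_b(H)$ and, by \eqref{e6}, $\|DR_tg\|_0\le C_0t^{-1/2}\|g\|_0$ is integrable against $e^{-\lambda t}\,dt$, so differentiating under the integral shows $(\lambda-L_2)^{-1}g\in C^1_b(H)$ with $\|D(\lambda-L_2)^{-1}g\|_0\le C_0\lambda^{-1/2}\|g\|_0$, hence $\|T_\lambda g\|_0\le C_0\|B\|_0\lambda^{-1/2}\|g\|_0$. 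For $\lambda\ge\lambda_0=4\|B\|_0^2C_0^2$ this gives $\|T_\lambda\|_{\mathcal L(B_b(H))}\le\tfrac12$, so $I-T_\lambda$ is invertible on $B_b(H)$ via its Neumann series, with inverse of norm $\le2$. Setting $g:=(I-T_\lambda)^{-1}f$ (so $\|g\|_0\le2\|f\|_0$) and $u:=(\lambda-L_2)^{-1}g\in D(L_2)\cap C^1_b(H)$, the identity $g-T_\lambda g=f$ becomes, after applying $(\lambda-L_2)^{-1}$, exactly $\lambda u-L_2u-\langle B,Du\rangle=f$; the stated bounds on $\|u\|_0$ and $\|Du\|_0$ follow from the resolvent estimates applied to $g$.

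For uniqueness in $D(L_2)$: if $w\in D(L_2)$ solves the homogeneous equation, then by Theorem \ref{t1} (applied with $f:=\lambda w-L_2w\in L^2(H,\mu)$) one has $w\in W^{2,2}(H,\mu)$, so $Dw\in L^2(H,\mu;H)$ and $\langle B,Dw\rangle\in L^2(H,\mu)$; rewriting $w=(\lambda-L_2)^{-1}\langle B,Dw\rangle$ and using the $L^2(\mu)$-gradient estimate for the resolvent gives $\|Dw\|_{L^2(\mu)}\le C_0\|B\|_0\lambda^{-1/2}\|Dw\|_{L^2(\mu)}\le\tfrac12\|Dw\|_{L^2(\mu)}$, so $Dw=0$ and hence $w=(\lambda-L_2)^{-1}0=0$.

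For the $W^{2,p}$-estimate the key point is that \eqref{e6} upgrades to an $L^p(\mu)$-gradient bound with a constant \emph{independent of} $p\ge2$. Starting from the representation \eqref{e3}, bounding $|DR_tf(x)|$ by the supremum over $|k|\le1$, applying H\"older with conjugate exponents $p,p'$, and using both that $\big(\int|f(e^{tA}x+y)|^pN(0,Q_t)(dy)\big)^{1/p}=(R_t|f|^p(x))^{1/p}$ and that the $L^{p'}(N(0,Q_t))$-norm of the centered Gaussian $y\mapsto\langle\Lambda_tk,Q_t^{-1/2}y\rangle$ is, since $p'\le2$, bounded by its standard deviation $|\Lambda_tk|\le C_0't^{-1/2}$, one obtains $|DR_tf(x)|^p\le (C_0')^pt^{-p/2}R_t|f|^p(x)$; integrating against the $R_t$-invariant measure $\mu$ and taking a Laplace transform yields $\|D(\lambda-L_2)^{-1}f\|_{L^p(\mu)}\le C_0\lambda^{-1/2}\|f\|_{L^p(\mu)}$ with the same $C_0$. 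Hence $T_\lambda$ is again a contraction of ratio $\le\tfrac12$ on $L^p(\mu)$ for every $\lambda\ge\lambda_0$, so $\|g\|_{L^p(\mu)}\le2\|f\|_{L^p(\mu)}$ (note $f,g\in B_b(H)\subset L^p(H,\mu)$ since $\mu$ is a probability measure). Since $g\in L^2\cap L^p$ the resolvents agree, $u=(\lambda-L_2)^{-1}g=(\lambda-L_p)^{-1}g$, so Theorem \ref{t1cg} applies and gives $u\in W^{2,p}(H,\mu)$ with $\int_H\|D^2u(x)\|^p\,\mu(dx)\le C(\lambda,p)^p\|g\|_{L^p(\mu)}^p\le\big(2C(\lambda,p)\big)^p\int_H|f(x)|^p\,\mu(dx)$, i.e. the assertion with $c=(2C(\lambda,p))^p$.

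The delicate point is this last paragraph: one must ensure that the gradient estimate for $R_t$ (and hence for the resolvent) holds in $L^p(\mu)$ with a constant that does not deteriorate as $p\to\infty$, so that the single threshold $\lambda_0$ keeps $T_\lambda$ a contraction on every $L^p(\mu)$, $p\ge2$ — the comparison ``$L^{p'}$-moment $\le$ $L^2$-moment for $p'\le2$'' of the standard Gaussian is exactly what provides this. One also has to check the consistency of the $L^2$- and $L^p$-resolvents (and of $T_\lambda$ acting on $B_b(H)$ versus on $L^p(\mu)$) so that Theorem \ref{t1cg} may be applied to the very function $u$ produced by the $L^2$-construction. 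Everything else is routine perturbation theory combined with the cited elliptic regularity results for $L_2$ and $L_p$.
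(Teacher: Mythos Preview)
The paper does not prove this proposition; it merely recalls it from \cite{DFPR}, so there is no in-paper proof to compare against. Your argument is the standard one and is essentially what \cite{DFPR} does: invert $I-T_\lambda$ on $B_b(H)$ by Neumann series using the sup-norm gradient bound \eqref{e6}, then feed the resulting $g=(I-T_\lambda)^{-1}f$ into Theorem \ref{t1cg} to get $W^{2,p}$-regularity. Your observation that the $L^p(\mu)$ gradient constant for $R_t$ does not deteriorate as $p\to\infty$ (via the $L^{p'}\le L^2$ Gaussian moment bound for $p'\le2$) is exactly the point that makes a single threshold $\lambda_0$ work for all $p\ge2$.

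One small slip of phrasing: the identity $g-T_\lambda g=f$ does not become the PDE ``after applying $(\lambda-L_2)^{-1}$''; rather, substituting $g=(\lambda-L_2)u$ (the definition of $u$) and $T_\lambda g=\langle B,Du\rangle$ gives $\lambda u-L_2u-\langle B,Du\rangle=f$ directly. Also, the bound $\|u\|_0\le 2\|f\|_0$ as stated is obtained from $\|u\|_0\le\lambda^{-1}\|g\|_0\le 2\lambda^{-1}\|f\|_0$, which gives $2\|f\|_0$ only when $\lambda\ge1$; this is a harmless convention already present in \cite{DFPR} (and in applications $\lambda$ is taken large anyway), but strictly speaking your ``follows from the resolvent estimates'' glosses over this.
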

\subsection{Approximations}
We are given  two sequences $(f_n)\subset B_b(H)$ and   $(B_n)\subset B_b(H,H)$ and a constant $M>0$ such that
\begin{align} \label{e16}
\nonumber &  (i)\;\;\;f_n(x)\to f(x),\quad B_n(x)\to B(x)\quad\mu\mbox{\rm-a.e.}.
\\
& (ii) \;\;\; \|f_n\|_0\le M,\quad \|B_n\|_0\le M, \;\; \;\; n \ge 1.
\end{align} The following result has been proved in \cite{DFPR}.
\begin{proposition}
\label{p35} Let $\lambda \ge \lambda_0$,  where
    $\lambda_0$ is given in \eqref{dt1}.
 Then the equation
$$
\lambda u_n-L u_n-\langle B_n,Du_n   \rangle=f_n,
$$
has a unique solution $u_n\in C_b^1(H) \cap D(L_2)$ which is given by
$$
u_n=(\lambda-L)^{-1}(I-T_{n,\lambda})^{-1} f_n,
$$
where
$ T_{n,\lambda} \varphi:=\langle B_n, D(\lambda-L_2)^{-1}
\varphi\rangle.
$
Moreover,
\begin{equation}
\label{e20} \|u_n\|_0\le 2M,\quad \| Du_n\|_0 \le
\frac{2C_{1,0}}{\lambda^{\frac12}}M, \;\;\; n \ge 1.
 \end{equation}
Finally,
 we have $u_n\to u,$
 and
 $Du_n\to Du,$ in
 $L^2(\mu)$, where $u$ is the solution to \eqref{e12}.
\end{proposition}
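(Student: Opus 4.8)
The plan is to read the statement off, for each fixed $n$, directly from Proposition \ref{p2}, and then to obtain the convergence $u_n\to u$, $Du_n\to Du$ in $L^2(\mu)$ from a resolvent‑type identity for the operators $I-T_{n,\lambda}$ combined with dominated convergence.

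Fix $n$. Proposition \ref{p2}, applied with the pair $(B_n,f_n)$ in place of $(B,f)$ — legitimate since $\|B_n\|_0\le M$ and $\lambda$ is above the corresponding threshold $4\|B_n\|_0^2C_0^2\le 4M^2C_0^2$ — gives at once the existence of a unique $u_n\in C^1_b(H)\cap D(L_2)$ solving the $n$‑th Kolmogorov equation, the representation $u_n=(\lambda-L)^{-1}(I-T_{n,\lambda})^{-1}f_n$, and, using in addition $\|f_n\|_0\le M$, the uniform bounds $\|u_n\|_0\le 2\|f_n\|_0\le 2M$ and $\|Du_n\|_0\le 2C_{1,0}\lambda^{-1/2}\|f_n\|_0\le 2C_{1,0}\lambda^{-1/2}M$, i.e.\ \eqref{e20}. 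By \eqref{e16} and dominated convergence one also has $\|B\|_0\le M$, so Proposition \ref{p2} likewise furnishes the solution $u=(\lambda-L_2)^{-1}(I-T_\lambda)^{-1}f$ of \eqref{e12}. This part is immediate; only the convergence needs work.

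For the convergence I would first record that, by \eqref{e16}(i)--(ii) and dominated convergence, $f_n\to f$ and $B_n\to B$ in $L^q(H,\mu)$ for every $q\in[1,\infty)$. From the $L^2(\mu)$‑gradient estimate for the resolvent (the $L^2$‑analogue of \eqref{e6}), $\|D(\lambda-L_2)^{-1}\varphi\|_{L^2(\mu)}\le C_0\lambda^{-1/2}\|\varphi\|_{L^2(\mu)}$, together with $|\langle B_n,v\rangle|\le M|v|$, one gets $\|T_{n,\lambda}\|_{\mathcal L(L^2(\mu))}\le MC_0\lambda^{-1/2}\le\tfrac12$ uniformly in $n$ (and the same bound for $T_\lambda$); hence $(I-T_{n,\lambda})^{-1}$ and $(I-T_\lambda)^{-1}$ are well defined on $L^2(H,\mu)$ with operator norm $\le 2$, uniformly in $n$. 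Put $g_n:=(I-T_{n,\lambda})^{-1}f_n$, $g:=(I-T_\lambda)^{-1}f$, so that $u_n=(\lambda-L_2)^{-1}g_n$ and $u=(\lambda-L_2)^{-1}g$. Then
\[
g_n-g=(I-T_{n,\lambda})^{-1}(f_n-f)+(I-T_{n,\lambda})^{-1}(T_{n,\lambda}-T_\lambda)g .
\]
The first term tends to $0$ in $L^2(H,\mu)$ because $f_n\to f$ there and $(I-T_{n,\lambda})^{-1}$ is uniformly bounded. For the second, $(T_{n,\lambda}-T_\lambda)g=\langle B_n-B,D(\lambda-L_2)^{-1}g\rangle=\langle B_n-B,Du\rangle$, and since $Du$ is bounded ($u\in C^1_b(H)$) while $\langle B_n-B,Du\rangle\to 0$ $\mu$‑a.e.\ with $|\langle B_n-B,Du\rangle|\le 2M|Du|$, dominated convergence gives $(T_{n,\lambda}-T_\lambda)g\to 0$ in $L^2(H,\mu)$, and the uniform bound on $(I-T_{n,\lambda})^{-1}$ disposes of this term. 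Hence $g_n\to g$ in $L^2(H,\mu)$, and applying the bounded operators $(\lambda-L_2)^{-1}$ (operator norm $\le\lambda^{-1}$ on $L^2(\mu)$) and $D(\lambda-L_2)^{-1}$ yields $u_n\to u$ and $Du_n\to Du$ in $L^2(\mu)$.

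The only genuinely delicate point is the \emph{uniform-in-$n$} bound $\|(I-T_{n,\lambda})^{-1}\|_{\mathcal L(L^2(\mu))}\le 2$: it is precisely what lets the $\mu$‑a.e.\ convergence of $(B_n)$ and $(f_n)$, together with the common $L^\infty$ bound $M$, pass through the inversion, and it is the reason the threshold on $\lambda$ is governed by $M$ rather than by $\|B\|_0$ alone (the two coincide, e.g., if one requires $\|B_n\|_0\le\|B\|_0$, as for conditional‑expectation approximations). Everything else is routine dominated convergence and the gradient bound \eqref{e6}. An equivalent route that avoids the Neumann series: subtract the two Kolmogorov equations, decompose $\langle B_n,Du_n\rangle-\langle B,Du\rangle=\langle B_n,D(u_n-u)\rangle+\langle B_n-B,Du\rangle$, apply the $L^2$‑gradient bound to $(\lambda-L_2)^{-1}$, and absorb the resulting term $MC_0\lambda^{-1/2}\|D(u_n-u)\|_{L^2(\mu)}\le\tfrac12\|D(u_n-u)\|_{L^2(\mu)}$ on the left‑hand side.
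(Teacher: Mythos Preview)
The paper does not give a proof of this proposition; it is simply quoted from \cite{DFPR}. Your argument is correct and is the natural one: the per-$n$ statements follow directly from Proposition~\ref{p2}, and the $L^2(\mu)$-convergence comes from the second-resolvent identity
\[
g_n-g=(I-T_{n,\lambda})^{-1}\bigl[(f_n-f)+(T_{n,\lambda}-T_\lambda)g\bigr]
\]
together with the uniform bound $\|(I-T_{n,\lambda})^{-1}\|_{\mathcal L(L^2(\mu))}\le 2$ and dominated convergence. Your closing remark that the effective threshold is $\lambda\ge 4M^2C_0^2$ (governed by $M$ rather than by $\|B\|_0$) is on point: literally, $\lambda\ge\lambda_0=4\|B\|_0^2C_0^2$ alone need not give $\|T_{n,\lambda}\|\le\tfrac12$ unless one also has $\|B_n\|_0\le\|B\|_0$; in the paper's actual applications (e.g.\ \eqref{e29}, where $B_{N,n}=R_{1/n}B_N$ satisfies $\|B_{N,n}\|_0\le\|B_N\|_0$) this is the case, and in any event one works with $\lambda$ above the $M$-threshold.
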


Next we prove a  new result. The idea behind the result is  that
 if $(f_n)$ satisfies \eqref{e16} then, for any $x \in H$ (not only $\mu$-a.e.), $t>0$,
 \begin{align} \label{f5}
 R_t f_n (x) \to R_t f(x)
 \end{align}
as $n \to \infty$, due to the fact that, for any $x \in H,$ the law of the OU process $Z(t,x)$ at time $t>0$ is absolutely continuous with respect to $\mu$.


\begin{lemma}
\label{new11}  Consider the situation of  Proposition \ref{p35}. Then we have:
\begin{align}\label{pte}
 u_n (x) \to u(x), \;\;\;  Du_n (x) \to Du(x),
\end{align}
for any $x \in H$.
\end{lemma}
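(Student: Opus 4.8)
The plan is to leverage the representation formulas $u_n=(\lambda-L)^{-1}(I-T_{n,\lambda})^{-1}f_n$ and $u=(\lambda-L_2)^{-1}(I-T_\lambda)^{-1}f$ from Propositions \ref{p2} and \ref{p35}, together with the pointwise regularizing property \eqref{f5} of the Ornstein-Uhlenbeck semigroup. The starting observation is that, since $R_t g(x)=\int_H g(y)\,N(e^{tA}x,Q_t)(dy)$ and $N(e^{tA}x,Q_t)\sim\mu$ for every $x\in H$ and $t>0$, dominated convergence (using the uniform bound in \eqref{e16}(ii)) gives $R_tg_n(x)\to R_tg(x)$ for \emph{every} $x\in H$ whenever $g_n\to g$ $\mu$-a.e.\ with $\|g_n\|_0\le M$. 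Taking Laplace transforms, the same holds for the resolvents: $(\lambda-L_2)^{-1}g_n(x)\to(\lambda-L_2)^{-1}g(x)$ pointwise everywhere, and likewise $D(\lambda-L_2)^{-1}g_n(x)\to D(\lambda-L_2)^{-1}g(x)$ by differentiating \eqref{e3} under the integral (again dominated convergence, using the Gaussian density and the bound $\|\Lambda_t\|\le C_0't^{-1/2}$ which is integrable after multiplying by $e^{-\lambda t}$).

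Next I would transfer this to the Neumann series. Recall that for $\lambda\ge\lambda_0$ one has $\|T_{n,\lambda}\|_{\mathcal L(B_b(H))}\le 1/2$ and $\|T_\lambda\|_{\mathcal L(B_b(H))}\le 1/2$ uniformly in $n$, so $(I-T_{n,\lambda})^{-1}f_n=\sum_{k\ge0}T_{n,\lambda}^kf_n$ with the tail controlled uniformly. Hence it suffices to show $T_{n,\lambda}^kf_n\to T_\lambda^kf$ pointwise for each fixed $k$, and then exchange the limit with the (uniformly convergent) sum. The case $k=0$ is the hypothesis $f_n\to f$ $\mu$-a.e.\ — but here I need pointwise convergence, so I would instead argue inductively starting from $T_{n,\lambda}f_n(x)=\langle B_n(x),D(\lambda-L_2)^{-1}f_n(x)\rangle$: by the previous paragraph $D(\lambda-L_2)^{-1}f_n(x)\to D(\lambda-L_2)^{-1}f(x)$ for every $x$, and $B_n(x)\to B(x)$ only $\mu$-a.e.; however $T_{n,\lambda}f_n$ is then a function that converges $\mu$-a.e.\ and is uniformly bounded, so applying $(\lambda-L_2)^{-1}$ (or rather $T_{m,\lambda}$) to it again produces, by the OU-smoothing argument, a function converging \emph{everywhere}. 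So the structure is: one application of the $B$-factor loses pointwise-everywhere down to $\mu$-a.e., but the very next application of a resolvent restores convergence everywhere; since in $u_n$ there is always an outer resolvent $(\lambda-L)^{-1}$, both $u_n(x)$ and $Du_n(x)$ end up converging for every $x$.

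Concretely, I would write $u_n=(\lambda-L_2)^{-1}g_n$ with $g_n:=(I-T_{n,\lambda})^{-1}f_n$ and $u=(\lambda-L_2)^{-1}g$ with $g:=(I-T_\lambda)^{-1}f$; show first that $g_n\to g$ in $L^2(\mu)$ (this is essentially contained in Proposition \ref{p35}, since $u_n\to u$ and $Du_n\to Du$ in $L^2(\mu)$ forces $g_n=\lambda u_n-L u_n=f_n+\langle B_n,Du_n\rangle\to f+\langle B,Du\rangle=g$ in $L^2(\mu)$, using $\|B_n\|_0,\|B\|_0\le M$), and that $\|g_n\|_0\le 2M$ uniformly; then conclude $u_n(x)=(\lambda-L_2)^{-1}g_n(x)\to(\lambda-L_2)^{-1}g(x)=u(x)$ for every $x$ by the OU-smoothing lemma, and similarly for $Du_n(x)\to Du(x)$ via \eqref{e3}. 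The main obstacle, and the point requiring care, is exactly this interplay: $L^2(\mu)$-convergence plus a uniform sup-bound is \emph{not} enough to apply dominated convergence inside $R_t$ directly (one only gets $\mu$-a.e.\ convergence of the integrand along a subsequence), so I would argue that along any subsequence one can extract a further subsequence converging $\mu$-a.e., apply the Gaussian dominated-convergence argument to get convergence at the fixed point $x$, and then use the uniqueness of the limit $g(x)$ (resp.\ $(\lambda-L_2)^{-1}g(x)$) to upgrade to convergence of the full sequence. This subsequence-extraction plus Gaussian-density domination is the only delicate step; everything else is bookkeeping with the Neumann series and Laplace transforms.
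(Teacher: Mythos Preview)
Your proposal is correct and, in its concrete final form, is essentially the paper's own argument: write $u_n=(\lambda-L_2)^{-1}g_n$ with $g_n=f_n+\langle B_n,Du_n\rangle$, use Proposition~\ref{p35} to get $g_n\to g$ in $L^2(\mu)$ (hence $\mu$-a.e.\ along a subsequence) with a uniform sup bound, and then invoke the equivalence $N(e^{tA}x,Q_t)\sim\mu$ together with dominated convergence to obtain pointwise-everywhere convergence of $u_n$ and (via \eqref{e3}) of $Du_n$. Your explicit subsequence-of-subsequence argument to recover convergence of the full sequence is a point the paper leaves implicit, so if anything your write-up is slightly more careful there; the Neumann-series detour in your first two paragraphs is unnecessary once you have the direct $g_n$ route.
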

\begin{proof} By a standard argument, possibly  passing to a subsequence, we may assume  that $Du_n(x) \to Du(x)$, $\mu$-a.e.. It follows that for any $x$, $\mu$-a.e.,
$$
f_n(x) + \langle B_n(x), D u_n(x)\rangle \to f(x) + \langle B(x), D u(x) \rangle,
$$
as $n \to \infty$.  We write, for any $\lambda \ge \lambda_0$, $x \in H$,
$$
u_n(x) = \int_0^{\infty} e^{-\lambda t } R_t \big (f_n + \langle B_n, D u_n\rangle \big)(x)dt.
$$
By  the argument used in \eqref{f5} we can apply the dominated convergence theorem and obtain that
 $u_n (x) \to u(x)$, as $n \to \infty$, $x \in H$.

\smallskip Concerning $Du_n$, using \eqref{e6}, we obtain, for any $x \in H$, $h \in H$,
\begin{align*}
\langle Du_n(x), h \rangle = \int_0^{\infty} e^{-\lambda t } \langle DR_t \big (f_n + \langle B_n, D u_n\rangle \big)(x), h\rangle dt.
\end{align*}
Setting $g_n = f_n + \langle B_n, D u_n\rangle $,
 $g = f + \langle B, D u\rangle $,
note that
\begin{align*}
\langle DR_t g_n (x), e_k \rangle =  \int_H \langle
  \Lambda_t e_k,Q_t^{-\frac12} y\rangle \, g_n(e^{tA}x+y)N(0,Q_t)(dy),
  \;\; k \ge 1, \, x \in H, \, t>0.
\end{align*}
It follows that
$$
|\langle DR_t (g_n - g) (x), e_k \rangle |^2
\le |\Lambda_t e_k|^2  \,
 \int_H  |g(e^{tA}x+y) -  g_n(e^{tA}x+y)|^2 N(0,Q_t)(dy),
$$
and so
$$
|DR_t (g_n - g) (x)   |^2
\le \|\Lambda_t \|^2_{ }  \,
 \int_H  |g(e^{tA}x+y) -  g_n(e^{tA}x+y)|^2 N(0,Q_t)(dy).
$$
Now we get $|DR_t (g_n - g) (x)   |^2 \to 0$ as $n \to \infty$, for any $x \in H$, $t>0$, by the same argument used in \eqref{f5}.

Using again the dominated convergence theorem we obtain easily that $Du_n(x) \to Du(x)$, as $n \to \infty$, for any $x \in H$.
\end{proof}

\subsection{Modified mild formulation}
Recall   the notation
\begin{align}\label{bou1}
    B_N = B\, 1_{B(0,N)},\;\;\; N \ge 1,
\end{align}
where $B(0,N)$ is the open ball of radius $N$ (hence $B_N \in B_b(H,H)$, $N \ge 1$).

For any $i\in \mathbb N$ we denote  the $i^{th}$ component of $B$ by
$B^{(i)}$, i.e.,
$$
B^{(i)}(x):=\langle B(x),e_i  \rangle,\;\;\; x \in H.
$$
Then  for $\lambda \ge 4 \|B_N \|^2_0 C_0^2$  we consider the  solution
$u^{(i)}_N$ of the equation
\begin{equation}
\label{e21} \lambda u^{(i)}_N-Lu^{(i)}_N-\langle B_N, D u^{(i)}_N
\rangle=B^{(i)}_N,\quad \mu\;\mbox{\rm-a.e.}
\end{equation}
We recall that by Proposition \ref{p2},  $u^{(i)}_N \in C^1_b (H)$
 and,  for any $p \ge 2$, $u^{(i)}_N \in W^{2,p}(H, \mu).$

 The next result
  is a kind of ``local version'' of Theorem 7 in \cite{DFPR}.
 In contrast to Theorem \ref{main theorem} the result holds
for any initial condition $x \in H.$

\begin{remark} \label{mi1}
{\rm  Compared   with the proof of   Theorem 7 in \cite{DFPR}, here  we will   use  Lemma \ref{new11} which  allows to simplify some arguments of \cite{DFPR} and it is also
 needed to justify  the approximation procedure (see in particular \eqref{co5}).
 We also mention that differently with respect to \cite{DFPR} in
  Step 3 of the proof we need to construct a  suitable auxiliary process $\hat X^N= (\hat X_t^N)$
 (see \eqref{xht}) in order to  apply
  the Girsanov theorem and  get the assertion.
}
\end{remark}

\begin{theorem}
\label{p4} Let $X = (X_t)$ be  a weak mild solution of equation \eqref{uno}
 defined on some filtered probability space with
  a cylindrical $({\mathcal F}_{t})$-Wiener process $W$.
  Consider the stopping time
$$
\tau_N^X = \inf \{t \ge 0 \; :\; X_t \not \in B(0,N)\}
$$
 Let $u^{(i)}_N$ be the solution of \eqref{e21} and set
$X^{(i)}_t=\langle  X_t,e_i  \rangle$.
  For any $t>0$ we have $\mathbb \P\mbox{\rm
-a.s.}$ on the event
 $\{ t \le \tau_N^X \}$
\begin{align}
\label{e22}  \nonumber  X_t^{(i)} = \, & e^{-\lambda_i t}(\langle x,e_i
\rangle +u^{(i)}_N(x)) -u^{(i)}_N(X_t)
\\ \nonumber  &
+(\lambda+\lambda_i)\int_0^te^{-\lambda_i (t-s)}u^{(i)}_N(X_s)ds
\\
+&\int_0^te^{-\lambda_i (t-s)}(d\langle W_s,e_i \rangle+\langle
Du^{(i)}_N(X_s),dW_s  \rangle).\;
\end{align}
\end{theorem}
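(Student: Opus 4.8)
The plan is to derive \eqref{e22} from the mild formulation \eqref{mild} by applying an Itô formula to each $u^{(i)}_N(X_t)$ together with the identity \eqref{e21}, first at the level of smooth approximations and then passing to the limit. Concretely, I would take the approximating sequences $(B_n) \subset B_b(H,H)$ and $(B^{(i)}_N)_n \subset B_b(H)$ satisfying \eqref{e16} (with $B$ replaced by $B_N$ and $f$ by $B^{(i)}_N$), let $u^{(i)}_{N,n}$ be the corresponding classical solutions from Proposition \ref{p35}, and recall from Lemma \ref{new11} that $u^{(i)}_{N,n}(x) \to u^{(i)}_N(x)$ and $Du^{(i)}_{N,n}(x) \to Du^{(i)}_N(x)$ for \emph{every} $x \in H$, with uniform bounds \eqref{e20}. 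The pointwise (not just $\mu$-a.e.) convergence is exactly what is needed to evaluate these functions along the trajectory $(X_s)$, since a priori the law of $X_s$ need not be absolutely continuous with respect to $\mu$.

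\medskip
\noindent\textbf{Step 1 (Itô formula for the regularized functions).} For each fixed $n$, the function $v := u^{(i)}_{N,n}$ lies in $C^1_b(H) \cap W^{2,p}(H,\mu)$ and solves $\lambda v - Lv - \langle B_{N,n}, Dv\rangle = B^{(i)}_{N,n}$. The first move is to write an Itô formula for $e^{-\lambda_i(t-s)} v(X_s)$ under $d X_s = (A X_s + B(X_s))\,ds + dW_s$. Since $v$ is only $W^{2,p}(\mu)$ and $X$ does not in general live in $D(A)$, one cannot apply the classical Itô formula directly; instead I would use the Itô formula for the Ornstein--Uhlenbeck structure (of the type established in \cite{DZ1}, or argue via $e^{\varepsilon A}$-regularizations and the OU semigroup), giving, on the event where everything is integrable,
\begin{align*}
e^{-\lambda_i(t-s)} v(X_s)\Big|_{s=0}^{s=t}
= \int_0^t e^{-\lambda_i(t-s)}\Big( \lambda_i v(X_s) + L v(X_s) + \langle B(X_s), Dv(X_s)\rangle\Big)\,ds
+ \int_0^t e^{-\lambda_i(t-s)} \langle Dv(X_s), dW_s\rangle.
\end{align*}
Using the PDE to substitute $Lv = \lambda v - \langle B_{N,n}, Dv\rangle - B^{(i)}_{N,n}$, the drift term becomes $\int_0^t e^{-\lambda_i(t-s)}\big( (\lambda+\lambda_i) v(X_s) + \langle B(X_s) - B_{N,n}(X_s), Dv(X_s)\rangle - B^{(i)}_{N,n}(X_s)\big)\,ds$.

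\medskip
\noindent\textbf{Step 2 (Localization by $\tau_N^X$).} On the event $\{t \le \tau_N^X\}$ we have $X_s \in B(0,N)$ for all $s \le t$, hence $B_N(X_s) = B(X_s)$ there; for the approximations the term $\langle B(X_s) - B_{N,n}(X_s), Dv(X_s)\rangle$ is controlled by $\|B\|_{B_b(B(0,N))}$ and the uniform gradient bound \eqref{e20}, and $\mu$-a.e.\ convergence $B_{N,n}\to B_N$ combined with absolute continuity considerations (here one must be slightly careful, cf.\ Remark \ref{mi1} and the auxiliary process $\hat X^N$) will make this term negligible in the limit. Combining with $X_s^{(i)} = \langle X_s, e_i\rangle = e^{-\lambda_i t}\langle x,e_i\rangle + \int_0^t e^{-\lambda_i(t-s)}\big(\langle B(X_s),e_i\rangle\,ds + d\langle W_s,e_i\rangle\big)$ from \eqref{mild} (componentwise), and noting $B^{(i)}_{N,n}(X_s) \to B^{(i)}_N(X_s) = B^{(i)}(X_s) = \langle B(X_s),e_i\rangle$ on this event, the two drift integrals telescope: the $-B^{(i)}_{N,n}(X_s)$ term cancels against the $\langle B(X_s),e_i\rangle$ term appearing in the expression for $X_s^{(i)}$, leaving precisely \eqref{e22}.

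\medskip
\noindent\textbf{Step 3 (Passage to the limit).} Finally I would let $n \to \infty$. Pointwise convergence of $u^{(i)}_{N,n}$ and $Du^{(i)}_{N,n}$ to $u^{(i)}_N$ and $Du^{(i)}_N$ (Lemma \ref{new11}) handles the boundary terms $v(x)$, $v(X_t)$ and the Lebesgue integral $\int_0^t e^{-\lambda_i(t-s)} u^{(i)}_{N,n}(X_s)\,ds$ via dominated convergence (uniform bounds \eqref{e20}). For the stochastic integral $\int_0^t e^{-\lambda_i(t-s)}\langle Du^{(i)}_{N,n}(X_s), dW_s\rangle$ one passes to the limit in $L^2(\P)$ using the Itô isometry and $|Du^{(i)}_{N,n}(X_s) - Du^{(i)}_N(X_s)| \to 0$ pointwise with uniform bound, by dominated convergence for the expectation of the squared integrand. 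The delicate point — and the one I expect to be the main obstacle — is justifying the Itô formula in Step 1 and the limit passage in Step 3 \emph{on the event} $\{t \le \tau_N^X\}$ rather than globally: the stopping time depends on $X$, the OU-type Itô formula is naturally a statement about the whole path, and $X$ may not be absolutely continuous in law with respect to $\mu$, so controlling the $\mu$-negligible exceptional sets along the trajectory (which is precisely where the auxiliary process $\hat X^N$ and the Girsanov theorem of Remark \ref{mi1} enter) requires care. Once that localization is handled cleanly, the algebraic cancellation producing \eqref{e22} is routine.
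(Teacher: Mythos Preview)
Your overall strategy is correct and matches the paper's: approximate $B_N$ by smooth $B_{N,n}$, solve the regularized PDE for $u^{(i)}_{N,n}$, apply an It\^o formula, substitute the PDE to eliminate $Lv$, localize with $\tau_N^X$, and pass to the limit using Lemma \ref{new11} for the pointwise convergence and the auxiliary process $\hat X^N$ together with Girsanov for the residual term $\langle B_N - B_{N,n}, Dv\rangle$. The algebraic cancellation and the final variation-of-constants computation are routine, as you say.

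However, there is a genuine gap in your Step 1. You write the It\^o formula for $v(X_s)$ with $v=u^{(i)}_{N,n}\in C^1_b(H)\cap W^{2,p}(H,\mu)$ and obtain a term $Lv(X_s)$ in the drift. In infinite dimensions with cylindrical noise this is not available: $Lv=\tfrac12\,\mathrm{Tr}(D^2v)+\langle Ax,Dv\rangle$ involves an infinite trace and the unbounded operator $A$, and $v\in D(L_2)$ only gives $Lv$ as an $L^2(\mu)$-object, not a pointwise-defined function you can evaluate along $X_s$. Your references to ``the It\^o formula for the Ornstein--Uhlenbeck structure'' or ``$e^{\varepsilon A}$-regularizations'' are too vague to fill this gap. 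The paper resolves it by a \emph{second} layer of approximation that you have omitted: a finite-dimensional Galerkin projection $\pi_m$, producing functions $u^{(i)}_{N,n,m}$ that depend only on the first $m$ coordinates and solve the PDE with $L$ replaced by the genuinely finite-dimensional operator $L^{(m)}$. For these, the \emph{classical finite-dimensional} It\^o formula applies to $u^{(i)}_{N,n,m}(\pi_m X_t)$, and one then passes $m\to\infty$ (using Lemma \ref{new11} once more, applied to the approximating family $\pi_m B_{N,n}\circ\pi_m\to B_{N,n}$) before letting $n\to\infty$.

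A second, smaller point you have not addressed: in the limit $n\to\infty$, the term $I_n=\int_0^{t\wedge\tau_N^X}\langle Du^{(i)}_N(X_s),B_N(X_s)-B_{N,n}(X_s)\rangle\,ds$ is handled via Girsanov and the absolute continuity of the law of $\hat X^N_s$ with respect to $\mu$, but this absolute continuity holds only for $s>0$. The paper therefore first works on $[r\wedge\tau_N^X,\,t\wedge\tau_N^X]$ with $r>0$, applies dominated convergence there (equation \eqref{f57}), and only afterwards sends $r\to 0^+$ using continuity of trajectories. Without this device the integrability of the Radon--Nikodym density near $s=0$ is not controlled.
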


\begin{proof}
We fix $t>0$, $N \ge 1$ and $i \ge 1$.

{\bf Step 1} Approximation of $B_N$  and $u_N$.\medskip

Set
\begin{equation}
\label{e29} B_{N, n}(x)=\int_H B_N(e^{\frac1nA}x+y)N(0,
Q_{\frac1n})(dy),\quad x\in H.
\end{equation}
Then $B_{N, n }$ is of $C^\infty$ class and all its derivatives are
bounded. Moreover, $\|B_{N, n }\|_0\le \|B_N\|_0$, $n \ge 1$. It is easy to see that,
possibly passing to a subsequence,
\begin{equation}
\label{e30} B_{N, n }\to B_N,\quad \;\; \mu-a.e..
\end{equation}
(indeed $B_{N, n }\to B_N \;  \mbox{\rm in}\;L^2(H,\mu;H)$;  this result can
be first checked for continuous and bounded $B$). Now we denote by $u_{N, n }^{(i)}$ the solution of the equation
\begin{equation}
\label{e31}
\lambda u_{N, n }^{(i)}- Lu_{N, n }^{(i)}-\langle B_{N, n },Du_{N, n }^{(i)}   \rangle=B_{N, n }^{(i)},
\end{equation}
where $B_{N, n }^{(i)}=\langle B_{N, n },e_i  \rangle.$ By Lemma \ref{new11}
we have, possibly passing to a subsequence, for any $x \in H$,
\begin{align} \label{conv1}
\lim_{n\to \infty}u_{N, n }^{(i)}(x)=u^{(i)}_N(x), \;\; \lim_{n\to
\infty}Du_{N, n }^{(i)}(x)=D u^{(i)}_N(x),\;\;
\\ \nonumber \;\;
\sup_{n \ge 1} \| u_{N, n }^{(i)} \|_{C^{1}_b(H)} =
C_{ i, N} < \infty,
\end{align}
 where $u^{(i)}_N$  is the solution of \eqref{e21}.
\medskip

{\bf Step 2} Approximation of $X_t$.\medskip

For any $m\ge i$ we set $X_m = (X_{m,t})$, $X_{m,t}:=\pi_m X_t,$
where    $\pi_m=\sum_{j=1}^me_j\otimes e_j$. Then we have
\begin{equation}
\label{e32}
X_{m,t}=\pi_mx+\int_0^tA_mX_{s}ds+\int_0^t\pi_mB(X_{s})ds+\pi_mW_t,
\end{equation}
 where $A_m=\pi_mA$.

Now we denote by $u^{(i)}_{N, n,m}$ the solution of the equation
\begin{equation}
\label{e33}
\lambda u^{(i)}_{N,n,m}- L u^{(i)}_{N,n,m}-\langle \pi_m B_{N,n} \circ \pi_m, Du^{(i)}_{N,n,m}   \rangle=B_{N,n}^{(i)} \circ \pi_m,
\end{equation}
where  $(B_{N,n} \circ \pi_m) (x) = B_{N,n} ( \pi_m x)$, $x \in H$. Since only a
finite number of variables is involved, we have, equivalently,
$$
\lambda u^{(i)}_{N,n,m} - L^{(m)}u^{(i)}_{N,n,m}-\langle \pi_m B_{N,n} \circ
\pi_m, Du^{(i)}_{N,n,m}   \rangle=B_{N,n}^{(i)} \circ \pi_m,
$$
with
\begin{equation}
\label{e34}
L^{(m)}\varphi(x)=\frac12\;\mbox{\rm Tr}\;[\pi_mD^2\varphi(x)]+\langle A_mx,D\varphi(x)  \rangle.
\end{equation}
Moreover, since $u_{N,n,m}^{(i)}$ depends only on the first $m$ variables,  we have
\begin{align} \label{uu}
u_{N,n,m}^{(i)} (\pi_m y) = u_{N,n,m}^{(i)} (y), \;\;\;
y \in H.
\end{align}
Applying the finite-dimensional It\^o formula to $u^{(i)}_{N,n,m}(X_{m,t})
= u^{(i)}_{N,n,m}(X_{t})$  with the stopping time
 $\tau_N^X$
yields
\begin{gather}
\label{e35}
u^{(i)}_{N,n,m}(X_{m, {t \wedge \tau_N^X}
 }) - u^{(i)}_{N,n,m}(\pi_m x) =
\\ \nonumber
\int_0^{t \wedge \tau_N^X} \Big (\frac12\;\mbox{\rm Tr}\;[D^2u^{(i)}_{N,n,m}(X_{m,s})]
+\langle Du^{(i)}_{N,n,m}(X_{m,s}), A_m X_s+\pi_m B(X_s) \rangle \Big)ds
\\ \nonumber
+\int_0^{t \wedge \tau_N^X}  \langle Du^{(i)}_{n,m}(X_{m,s}), \pi_md W_s  \rangle.
\end{gather}
On the other hand, by \eqref{e33} we have
$$
\begin{array}{l}
\lambda u^{(i)}_{N,n,m}(X_{m,t})-\frac12\;\mbox{\rm Tr}\;[D^2u^{(i)}_{N,n,m}(X_{m,t})]\\
\\
-\langle Du^{(i)}_{N,n,m}(X_{m,t}), A_mX_{m,t}+\pi_mB_{N,n}(X_{m,t}) \rangle=B_{N,n}^{(i)}(X_{m,t}).
\end{array}
$$
 Let us fix $r \in ]0,t]$. This will be useful in Step 3 of the proof
to apply the Girsanov theorem (see in particular \eqref{f57}).

Comparing with \eqref{e35} and using \eqref{uu}
 we find
\begin{equation}
\label{e36}
 u^{(i)}_{N,n,m}(X_{ {t \wedge \tau_N^X}}) - u^{(i)}_{N,n,m}( X_{ {r \wedge \tau_N^X}})
\end{equation}$$
= \lambda \int_{r \wedge \tau_N^X}^{t \wedge \tau_N^X} u^{(i)}_{N,n,m}(X_{s})ds - \int_{r \wedge \tau_N^X}^{t \wedge \tau_N^X} B_{N,n}^{(i)}(X_{m,s})ds
$$$$
+\int_{r \wedge \tau_N^X}^{t \wedge \tau_N^X} \langle Du^{(i)}_{N,n,m}(X_{s}),  \pi_m(B(X_s)-B_{N,n}(X_{m,s})) \rangle ds\\
+\int_{r \wedge \tau_N^X}^{t \wedge \tau_N^X} \langle Du^{(i)}_{N,n,m}(X_{s}), d W_s  \rangle.
$$
Possibly passing to a subsequence,  and taking the limit in
probability as $m \to \infty$ (with respect to $\mathbb{P}$),
 we arrive at
\begin{gather} \label{fr3}
u^{(i)}_{N,n}(X_{{t \wedge \tau_N^X}}) - u^{(i)}_{N,n}(X_{{r \wedge \tau_N^X}})
 \\
\nonumber = \lambda \int_{r \wedge \tau_N^X}^{t \wedge \tau_N^X} u^{(i)}_{N,n}(X_{s})ds - \int_{r \wedge \tau_N^X}^{t \wedge \tau_N^X} B_{N,n}^{(i)}(X_{s})ds
\\ \nonumber
+\int_{r \wedge \tau_N^X}^{t \wedge \tau_N^X} \langle Du^{(i)}_{N,n}(X_{s}),  (B(X_s)-B_{N,n}(X_{s})) \rangle ds
+\int_{r \wedge \tau_N^X}^{t \wedge \tau_N^X} \langle Du^{(i)}_{N,n}(X_{s}), dW_s  \rangle.
\end{gather}
Let us   justify this  assertion.

First note that in equation \eqref{e33} we have the drift term
 $\pi_m B_{N,n} \circ \pi_m  $ which converges pointwise
 to $B_{N,n}$ and $B_{N,n}^{(i)} \circ \pi_m$ which
converges pointwise to $B_{N,n}^{(i)}$ as $m \to \infty$. Since such
functions are also uniformly bounded, we can apply Proposition
\ref{p35} and Lemma \ref{new11} and obtain that, possibly  passing to a subsequence (recall
that $n$ is fixed),
\begin{gather} \label{conv12}
\lim_{m\to \infty}u_{N,n,m}^{(i)}(x)=u^{(i)}_{N,n}(x), \;\; \lim_{m\to
\infty}Du_{N,n,m}^{(i)}(x)=D u^{(i)}_{N,n}(x),
\;\; x \in H,
\\
\nonumber \;\; \sup_{m \ge 1} \| u_{N,n,m}^{(i)} \|_{C^{1}_b(H)} =
C_{ i}^N < \infty.
\end{gather}
We only consider convergence of the two most involved terms in \eqref{e36}.

We first treat  convergence in $L^2(\Omega) $ of the stochastic integral. Recall that
$$
\int_0^{t \wedge \tau_N^X} \langle Du^{(i)}_{N,n}(X_{s}), dW_s  \rangle = \int_0^{t } 1_{\{ s \le  t \wedge \tau_N^X\} } \langle Du^{(i)}_{N,n}(X_{s}), d W_s  \rangle;
$$
by the isometry formula and \eqref{conv12} we get
\begin{equation} \label{co5}
 \E \Big | \int_0^{t \wedge \tau_N^X} \langle Du^{(i)}_{N,n,m}(X_{s})-  Du^{(i)}_{N,n}(X_{s}) , d W_s  \rangle \Big|^2  \to 0
\end{equation}
as $m \to \infty.$  Note that we have used  that
 $\lim_{m\to
\infty}Du_{N,n,m}^{(i)}(x)=D u^{(i)}_{N,n}(x),$ for any
 $ x \in H$ (not only for   $\mu$-a.e $x \in H$).
In a similar way we get
 $$
 \E \Big | \int_0^{r \wedge \tau_N^X} \langle Du^{(i)}_{N,n,m}(X_{s})-  Du^{(i)}_{N,n}(X_{s}) , dW_s  \rangle \Big|^2  \to 0
$$
as $m \to \infty.$

This shows that  $\int_{r \wedge \tau_N^X}^{t \wedge \tau_N^X} \langle Du^{(i)}_{N,n,m}(X_{s}), d W_s  \rangle$ converges  to $\int_{r \wedge \tau_N^X}^{t \wedge \tau_N^X} \langle Du^{(i)}_{N,n}(X_{s}), d W_s  \rangle$ in $L^2(\Omega)$ as $m \to \infty.$ To show that, $\P$-a.s.,
\begin{gather} \label{dop}
\lim_{m \to \infty}
 \int_{r \wedge \tau_N^X}^{t \wedge \tau_N^X} \Big|\langle Du_{N,n,m}^{(i)}(X_{s}),\pi
_{m}(B(X_{s})-B_{N,n}(X_{m,s}))\rangle\\ \nonumber - \langle Du_{N, n}
^{(i)}(X_{s}),(B(X_{s})-B_{N,n}(X_{s}))\rangle \Big| ds
 =0,
\end{gather}
 it is enough to  prove that $\lim_{m \to \infty} H_m + K_m =0$, where
$$
H_m =
 \int_{0}^{t \wedge \tau_N^X} \big|\langle Du_{N,n,m}^{(i)}(X_{s})- Du_{N,n}^{(i)}(X_{s}) ,\pi
_{m}(B(X_{s})-B_{N,n}(X_{m,s}))\rangle \big| ds
$$
and
$$
K_m =
 \int_{0}^{t \wedge \tau_N^X} \big|\langle Du_{N,n}^{(i)}(X_{s}),
 [\pi_{m}B(X_{s}) - B(X_{s})] + [B_{N,n}(X_{s}) - \pi_m B_{N,n}(X_{m,s})]
 \rangle \big| ds.
$$
By using \eqref{conv12} we easily get  the assertion.

\smallskip
{\bf Step 3} A convergence result involving stopping times.\medskip

 In order to pass to
 the limit in probability  as $n \to \infty$ in \eqref{fr3} we recall formula
\eqref{conv1} and argue as before. The only difficult term is
$$\int_{r \wedge \tau_N^X}^{t \wedge \tau_N^X} \langle Du^{(i)}_{N,n}(X_{s}),  (B(X_s)-B_{N,n}(X_{s})) \rangle ds = J_n + I_n,
 $$
$$
\text{where} \; J_n = \int_{r \wedge \tau_N^X}^{t \wedge \tau_N^X} \langle Du^{(i)}_{N,n}(X_{s})- Du^{(i)}_{N}(X_{s}),  (B(X_s)-B_{N,n}(X_{s})) \rangle ds, $$$$ I_n =
\int_{r \wedge \tau_N^X}^{t \wedge \tau_N^X} \langle Du^{(i)}_{N}(X_{s}),  (B_N(X_s)-B_{N,n}(X_{s})) \rangle ds
$$
(using that $s \le t \wedge \tau_N^X$).  As for $J_n$ we have
$$
J_n = \int_{r \wedge \tau_N^X}^{t \wedge \tau_N^X} \langle Du^{(i)}_{N,n}(X_{s})- Du^{(i)}_{N}(X_{s}),  (B_N(X_s)-B_{N,n}(X_{s})) \rangle ds,
$$
and so
$
|J_n| \le 2 \| B_N\|_0 \, \int_0^{t } |Du^{(i)}_{N,n}(X_{s})- Du^{(i)}_{N}(X_{s})|ds \to 0
$, $\P$-a.s.,
as $n \to \infty$, by Lemma \ref{new11}.

Let us consider $I_n$. We define  an auxiliary process $\hat X^N = (\hat X_t^N)$ as follows:
\begin{equation}\label{xht}
  \hat X_t^N := e^{tA}x + \int_{0}^t e^{(t-s)A} B_N (X_{s\wedge \tau_N^X})ds + \int_0^t e^{(t-s)A} dW_s, \;\; t \ge 0,
\end{equation}
Note that  $X_{s \wedge \tau^X_N} = \hat X^N_{s \wedge \tau^X_N}$,
 $s \ge 0$, so that
\begin{equation} \label{ret}
|I_n | = \Big | \int_{r \wedge \tau_N^X}^{t \wedge \tau_N^X} \langle Du^{(i)}_{N}(\hat X_{s}^N),  (B_N(\hat X_{s}^N)-B_{N,n}(\hat X_{s}^N)) \rangle ds\Big |$$$$ \le \| D^{(i)}u_N \|_0  \int_r^{t } | B_N(\hat X_{s}^N)-B_{N,n}(\hat X_{s}^N)| ds.
\end{equation}
Now we use the Girsanov theorem (see e.g. Appendix in \cite{DFPR}). Let $T>0$.
Since
$$
 \hat X_t^N := e^{tA}x + \int_{0}^t e^{(t-s)A} \hat B_s^N ds + \int_0^t e^{(t-s)A} dW_s, \;\; t \ge 0,
$$
where $\hat B_s^N  = B_N (\hat X_{s\wedge \tau_N^X}^N)$, $s \ge 0$, is an adapted and bounded process,
 we have that
$$
\tilde W_{t}^N:=W_{t}+\int_{0}^{t}\hat{B}_{s}^{N}ds
$$
is a cylindrical Wiener process on $\left(  \Omega, {\mathcal F},\left( {\mathcal F}_{t}\right) _{t\in\left[  0,T\right]},\widetilde{\mathbb{P}}_{N}\right)  $ where $\left.  \frac{d\widetilde{\mathbb{P}}_{N}}
{d\mathbb{P}}\right\vert _{{\mathcal F}_{T}}=\rho_{N}$,
$$
\rho_{N}=\exp
\left(-\int_{0}^{T}\hat {B}_{s}^{N}dW_{s}-
\frac{1}{2}\int_{0}^{T}|\hat{B}_{s}^{N}|^{2}ds\right).
$$
Hence $ \hat X_t^N =e^{tA}x+\int_{0}^{t}e^{\left(  t-s\right)  A}d\tilde W_{s}^N$  is an OU process on
 $(\Omega, {\mathcal F},  ({\mathcal F_t})_{t\in\left[  0,T\right]}, \tilde {\mathbb P}_{N})$.

Moreover, we  know that the law of $(\hat X_t^N)_{t \in [0,T]} $ on $C([0,T]; H)$    is equivalent to the law of the OU   process $Z(t,x)$  given in \eqref{ou11}.
 In particular, all their transition probabilities are equivalent. Now  under our assumptions   the law of $Z(t,x)$ is equivalent to $\mu$ for all $t >0 $ and $x \in H$   (see Theorem 11.3 in \cite{DZ}).

Let us come back to \eqref{ret}. Using that the law $\pi_t^N(x, \cdot)$ of $\hat X_t^N$ is absolutely continuous with respect to $\mu$, we obtain
\begin{equation} \label{f57}
    \mathbb{E}
\int_{r}^{t} | B_N(\hat X_{s}^N)-B_{N,n}(\hat X_{s}^N)| ds
= \int_{r}^{t} ds \int_H \big| B_N (y)- B_{N,n}(y)\big| \, \frac{d \pi_s^N(x,\cdot)}{d \mu}(y) \mu (dy),
\end{equation}
which tends to 0, as $n \to \infty$, by the dominated convergence theorem.
 Hence we have  found that
 $I_n \to 0$  in   $L^{1}(\Omega, \mathbb{P}).$

 Up to now  we have
\begin{gather*}
u^{(i)}_{N}(X_{{t \wedge \tau_N^X}
 }) - u^{(i)}_{N}(X_{r \wedge \tau_N^X})
\\
= \lambda \int_{r \wedge \tau_N^X}^{t \wedge \tau_N^X} u^{(i)}_{N}(X_{s})ds - \int_{r \wedge \tau_N^X}^{t \wedge \tau_N^X} B_{}^{(i)}(X_{s})ds
+\int_{{r \wedge \tau_N^X}}^{t \wedge \tau_N^X} \langle Du^{(i)}_{N}(X_{s}), dW_s  \rangle.
\end{gather*}
Passing to the limit as $r \to 0^+$, since the trajectories of $X$ are continuous, we finally get
\begin{gather} \label{f54}
u^{(i)}_{N}(X_{{t \wedge \tau_N^X}
 }) - u^{(i)}_{N}(x)
 \\ \nonumber
= \lambda \int_0^{t \wedge \tau_N^X} u^{(i)}_{N}(X_{s})ds - \int_0^{t \wedge \tau_N^X} B_{}^{(i)}(X_{s})ds
+\int_0^{t \wedge \tau_N^X} \langle Du^{(i)}_{N}(X_{s}), dW_s  \rangle.
\end{gather}
{\bf Step 4}  The final formula.\medskip

By \eqref{uno} we deduce
$$
dX^{(i)}_t=-\lambda_iX^{(i)}_tdt+B^{(i)}(X_t)dt+dW_t^{(i)}.
$$
Inserting the expression for  $B^{(i)}(X_t)$, which  we get from this identity,
 into \eqref{f54}, we obtain
\begin{gather*}
u^{(i)}_{N}(X_{{t \wedge \tau_N^X}
 }) - u^{(i)}_{N}(x)
\\
= - X_{{t \wedge \tau_N^X}}^i + x^i
+
\lambda \int_0^{t \wedge \tau_N^X} u^{(i)}_{N}(X_{s})ds  -\lambda_i \int_0^{t \wedge \tau_N^X} X^{(i)}_s ds
+\int_0^{t \wedge \tau_N^X} \langle Du^{(i)}_{N}(X_{s}), d W_s  \rangle \\ + W^{i}_{t \wedge \tau_N^X}.
\end{gather*}
By the variation of constants formula this is equivalent to
\begin{gather*}
X^{(i)}_{t \wedge \tau_N^X}=\displaystyle e^{-\lambda_i {t \wedge \tau_N^X}}\langle x,e_i
\rangle+\lambda\int_0^{t \wedge \tau_N^X}e^{-\lambda_i ({t \wedge \tau_N^X}-s)}u^{(i)}_N(X_s)ds \\
\displaystyle - \int_0^{t \wedge \tau_N^X} e^{-\lambda_i ({t \wedge \tau_N^X} -s)}du^{(i)}_N(X_s)
\displaystyle +
e^{-\lambda_i ({t \wedge \tau_N^X})}
\int_0^{t \wedge \tau_N^X} e^{\lambda_i s}[dW_s^{(i)}+\langle
Du^{(i)}_N(X_s),dW_s \rangle].
\end{gather*}
This identity  yields  \eqref{e22} on $\{ t \le  \tau_N^X \}$.
 \end{proof}

The next lemma is similar to Lemma 9 in \cite{DFPR}  and shows that $u_N(x) = \sum_{k \ge 1} u^{(k)}_N(x) e_k$
(where $u^{(k)}_N$ is as in \eqref{e21}) is a well defined function which
belongs to $C^1_b(H,H)$.
\begin{lemma} \label{de}
 For  $\lambda $
sufficiently large, i.e., $\lambda\ge \tilde \lambda$,
 with $\tilde \lambda = \tilde \lambda (A, \| B_N\|_0)>0$
there exists a unique $u_N = u_{\lambda, N} \in C^{1}_b(H,H)$ which
solves
$$
u_N(x) = \int_0^{\infty} e^{-\lambda t } R_t \big(Du_N(\cdot) B_N(\cdot) +
B_N(\cdot)\big)(x)dt,\;\;\; x \in H,
$$
where $R_t$ is the OU semigroup defined as in \eqref{e1} and acting
on $H$-valued functions. Moreover, we have the following assertions.

(i)   For any $h \in H$,
$Du_N(\cdot)[h]
 \in C_b (H, H)$   and $\|Du_N(\cdot)[h] \|_0
 $ $ \le C_{0, \lambda, N} |h|$;

(ii) for any $k \ge 1$, $\langle u_N(\cdot), e_k\rangle = u^{(k)}_N$,
  where $u^{(k)}_N$ is the solution defined in \eqref{e21};

(iii) There exists $c_{3}= c_3(A, \|B_N \|_0)>0$ such that, for any
$\lambda \ge \tilde \lambda$, $u = u_{\lambda}$ satisfies
 \begin{align} \label{grado}
 \| Du_N \|_0 \le \frac{c_3}{\sqrt{\lambda}}.
\end{align}
\end{lemma}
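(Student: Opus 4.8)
The idea is to build $u_N$ as a fixed point of the integral operator appearing in its statement, componentwise consistent with the scalar solutions $u_N^{(k)}$ of \eqref{e21}, and then upgrade the fixed-point bounds to a genuine $C^1_b(H,H)$ estimate. First I would set up the affine map $\Gamma$ on $C^1_b(H,H)$ defined by
\[
\Gamma(v)(x) = \int_0^\infty e^{-\lambda t} R_t\bigl(Dv(\cdot)B_N(\cdot)+B_N(\cdot)\bigr)(x)\,dt,
\]
where $R_t$ acts componentwise on $H$-valued maps. Using the smoothing estimate \eqref{e6} for $DR_t$ on $B_b(H)$ applied to each component $\langle Dv(\cdot)B_N(\cdot)+B_N(\cdot),e_k\rangle$, together with $\|B_N\|_0<\infty$, one gets $\|\Gamma(v)-\Gamma(w)\|_0 \le C\lambda^{-1}\|B_N\|_0\|Dv-Dw\|_0$ and $\|D\Gamma(v)-D\Gamma(w)\|_0\le C_0\|B_N\|_0\lambda^{-1/2}\|Dv-Dw\|_0$; hence for $\lambda\ge\tilde\lambda(A,\|B_N\|_0)$ large enough, $\Gamma$ is a strict contraction on $C^1_b(H,H)$ (with the norm $\|v\|_0+\|Dv\|_0$), giving existence and uniqueness of $u_N$. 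One must also check $\Gamma$ actually lands in $C^1_b(H,H)$, i.e. that $x\mapsto\int_0^\infty e^{-\lambda t}R_t(\cdots)(x)\,dt$ is Fréchet differentiable with bounded continuous derivative: this follows by differentiating under the integral sign, justified by \eqref{e6} and dominated convergence, exactly as in the scalar case of Proposition \ref{p2}.

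For (ii), I would observe that taking the inner product of the fixed-point equation with $e_k$ gives
\[
\langle u_N(x),e_k\rangle = \int_0^\infty e^{-\lambda t}R_t\bigl(\langle Du_N(\cdot)[B_N(\cdot)],e_k\rangle + B_N^{(k)}(\cdot)\bigr)(x)\,dt,
\]
and since $\langle Du_N(x)[B_N(x)],e_k\rangle = \langle B_N(x),D\langle u_N(x),e_k\rangle\rangle$ (by symmetry of the second-derivative pairing, or simply because $\langle u_N,e_k\rangle$ has gradient given componentwise), the scalar function $v^{(k)}:=\langle u_N(\cdot),e_k\rangle$ satisfies the same mild equation as $u_N^{(k)}$ from \eqref{e21}. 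By the uniqueness part of Proposition \ref{p2} (equivalently, of the scalar fixed-point argument at the same $\lambda$), $v^{(k)}=u_N^{(k)}$. This is where I expect to need a little care about which $\lambda$-threshold is used: I would simply take $\tilde\lambda \ge \lambda_0 = 4\|B_N\|_0^2C_0^2$ so that Proposition \ref{p2} applies simultaneously, and both solutions are characterized by the same contraction.

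For (i), the bound $\|Du_N(\cdot)[h]\|_0\le C_{0,\lambda,N}|h|$ is immediate once $u_N\in C^1_b(H,H)$: $Du_N(x)\in\mathcal{L}(H,H)$ and its operator norm is bounded uniformly in $x$ by construction, while continuity of $x\mapsto Du_N(x)[h]$ comes from the fixed-point equation differentiated once, using continuity of $DR_t$ in $x$ (which holds since $R_tf\in C^\infty_b$ for $f\in B_b$) and dominated convergence in $t$. For (iii), the estimate \eqref{grado} follows by plugging the contraction bound back into itself: from $u_N=\Gamma(u_N)$ and \eqref{e6} applied componentwise,
\[
\|Du_N\|_0 \le C_0\|B_N\|_0\,\Bigl(\int_0^\infty e^{-\lambda t}t^{-1/2}\,dt\Bigr)\bigl(1+\|Du_N\|_0\bigr) = \frac{C_0\|B_N\|_0\sqrt\pi}{\sqrt\lambda}\bigl(1+\|Du_N\|_0\bigr),
\]
so that for $\lambda$ large the factor multiplying $\|Du_N\|_0$ is $\le 1/2$, and one solves to get $\|Du_N\|_0\le c_3/\sqrt\lambda$ with $c_3=c_3(A,\|B_N\|_0)$.

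The main obstacle is really only the bookkeeping in the componentwise smoothing estimates: one must be careful that applying \eqref{e6} to each scalar component $\langle Du_N(\cdot)B_N(\cdot)+B_N(\cdot),e_k\rangle$ and then reassembling gives a bound on the full $H$-valued gradient with the \emph{same} constant $C_0$ (this works because \eqref{e6} is dimension-free, being a pointwise-in-$x$ Gaussian computation), and that the Hilbert–Schmidt versus operator-norm distinction noted in the excerpt does not interfere — here we only ever need operator-norm bounds on $Du_N$, obtained directly from $|DR_t\varphi(x)|\le C_0 t^{-1/2}\|\varphi\|_0$ with $\varphi$ scalar. Everything else is a routine Banach fixed-point argument plus differentiation under the integral sign, mirroring the scalar statements Proposition \ref{p2} and the construction already used there.
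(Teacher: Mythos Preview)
Your proposal is correct and follows the natural route. Note that the paper itself does not give a proof of this lemma; it simply states that the result ``is similar to Lemma 9 in \cite{DFPR}'' and omits the argument. Your fixed-point argument on $C^1_b(H,H)$, using the smoothing estimate \eqref{e6} componentwise for $H$-valued functions and then identifying the components via the scalar uniqueness in Proposition~\ref{p2}, is exactly the standard approach and is presumably what \cite{DFPR} does. One small remark: in your justification of (ii), the identity $\langle Du_N(x)[B_N(x)],e_k\rangle = \langle B_N(x), D\langle u_N(\cdot),e_k\rangle(x)\rangle$ is just the chain rule for the scalar function $\langle u_N(\cdot),e_k\rangle$ (no ``symmetry of second derivatives'' is needed), and for the full $C^1_b(H,H)$ regularity you should check continuity of $x\mapsto Du_N(x)$ in the operator norm of $\mathcal L(H,H)$, not only strong continuity for fixed $h$; this follows from the integral representation via \eqref{e3} and dominated convergence, since $\|\Lambda_t\|_{\mathcal L} \le C_0' t^{-1/2}$ is integrable against $e^{-\lambda t}$.
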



\section{Proof  of Theorem \ref{main theorem}}
\label{section proof}
 Let $X= (X_{t})$ and $Y = (Y_{t})$ be two  weak
  mild solutions (see \eqref{ci71} and \eqref{ci72}) defined on the same filtered
 probability space (solutions with respect to the same cylindrical Wiener process
 $W$)  starting at   $x \in H$.

 In the first part of the proof
 we will adapt  the proof of Theorem 1 in \cite{DFPR} by introducing additional
 stopping times. The main difference with respect to \cite{DFPR}
 will appear in Proposition
 \ref{stop} which is needed to finish the proof.

For the time being, $x$ is not specified.
In Proposition \ref{stop} a
restriction on $x$ will emerge.

Note that by our hypothesis
\begin{equation} \label{vu}
B_N = B 1_{B(0,N)} =  B' 1_{B(0,N)}=  B'_N.
\end{equation}
It follows that  Kolmogorov equation \eqref{e21} written with respect to
the truncated drift $B_N$ (with $B_N^{(i)}$ in the right-hand side) or
with respect to $B'_N$
 (with $B_N^{' \, (i)}$ in the right-hand side)
is the same and gives  the same solution $u^{(i)}_{N, \lambda}$.

It follows that both $X$ and $Y$ satisfy \eqref{e22} on the event
 $\{ t \le \tau_N^X \wedge \tau_N^Y \}$.

Now we  consider
$$
u_N = u_{N, \lambda}:  H \to H
$$  be  such that
   $u_N(x) = \sum_{i \ge 1} u^{(i)}_N(x)e_i$, $x \in H$, where
 $u^{(i)}_N = u^{(i)}_{N, \lambda} $
  solve \eqref{e21}   for some   $\lambda$
  large enough possibly depending on $N$.

 Let us fix $T>0$.
  By  \eqref{e22}, taking into account \eqref{vu}, we have, for $t \in [0,T\wedge \tau_N^X \wedge \tau_N^Y]$,
  $\P$-a.s.,
\begin{align*}
X_{t}-Y_{t} &  =u_N\left(  Y_{t}\right)  -u_N\left(  X_{t}\right)
+\left( \lambda-A\right)  \int_{0}^{t}e^{\left(  t-s\right) A}\left(
u_N\left( X_{s}\right)  -u_N\left(  Y_{s}\right)  \right) ds
 \\
&  + \int_{0}^{t}e^{\left(  t-s\right)  A} (  Du_N\left( X_{s}\right)
-Du_N(  Y_{s}))    dW_{s}.
\end{align*}
  Here and  in the sequel we will drop the $ \lambda$-dependence of $u_N$ to
  simplify notation.
 However,  at the end we will fix a value of $\lambda$ large enough.
  By \eqref{grado} we may assume
  that
 $ \| Du_N \| _{0}\leq 1/2.$

It follows that  for $t \in [0,T\wedge \tau_N^X \wedge \tau_N^Y]$,
\begin{align*} \left\vert X_{t}-Y_{t}\right\vert
\leq\frac{1}{2}\left\vert X_{t} -Y_{t}\right\vert  + \Big |  \left(
\lambda-A\right) \int_{0}^{t}e^{\left( t-s\right) A}\left(  u_N\left(
X_{s}\right) -u_N\left(  Y_{s}\right) \right) ds \Big|
\\
+ \Big | \int_{0}^{t}e^{\left(  t-s\right)  A}
 (  Du_N\left(
X_{s}\right) -Du_N(  Y_{s}))    dW_{s}  \Big|.
\end{align*}
Let $\eta$ be a  stopping time to be specified later and set
\begin{equation} \label{r88}
\tau = \eta \wedge T\wedge \tau_N^X \wedge \tau_N^Y.
\end{equation}
  Using that $1_{[0, \tau]}(t)
 = 1_{[0, \tau]}(t) \cdot $ $ 1_{[0,  \tau]}(s)$,
  $0 \le s \le t \le T$,
 we have (cf.  page 187 in \cite{DZ})
\begin{align*}
& 1_{[0, \tau]}(t)  \left\vert X_{t}-Y_{t}\right\vert \leq C \,
1_{[0, \tau ]}(t) \Big |\left( \lambda-A\right)
\int_{0}^{t}e^{\left( t-s\right) A}\left( u_N\left( X_{s}\right)
-u_N\left(  Y_{s}\right) \right) ds \Big|
\\
& + C\Big\vert 1_{[0, \tau]}(t)  \int_{0}^{t}e^{\left( t-s\right)
A}\left(  Du_N\left(  X_{s}\right) -Du_N\left(  Y_{s}\right) \right) \,
1_{[0, \tau ]}(s)\, dW_{s}
\Big\vert,
\end{align*}
where by $C$ we denote any constant which may depend on the
hypotheses on $A$, $B_N$ and  $T$.

Setting $1_{[0,\tau]}(s)\,X_s$ $=\tilde{X}_s$ and $1_{[0,\tau]}%
(s)\,Y_{s}=\tilde{Y}_{s}$, and, using the Burkholder-Davis-Gundy
inequality with   $q>2$ which will be determined
below, we obtain (recall that $\Vert \, \cdot\, \Vert_{}$ is  the
Hilbert-Schmidt norm, cf. Chapter 4 in \cite{DZ}) with $C = C_q$,
\begin{gather*}
  {\mathbb E} [  | \tilde{X}_{t}-\tilde{Y}_{t} |^{q}] \leq C_{}\, {\mathbb E}\Big [ e^{\lambda q t}
 \Big |  \left ( \lambda-A\right)
 \int_{0}^{t}e^{\left( t-s\right) A} e^{- \lambda s} ( u_N(  X_{s}) -u_N(  Y_{s}) )
 1_{[0,\tau]}(s) ds
\Big |^q \Big]
\\
  +\, C_{}{\mathbb E}\Big[  \Big(  \int_{0}^{t}1_{[0,\tau]}(s)
\left\Vert e^{\left( t-s\right)  A} (  Du_N\left(  X_{s}\right)
-Du_N\left( Y_{s}\right) )  \,\right\Vert _{}^{2}ds\Big)
^{q/2}\Big].
\end{gather*}
In the sequel we also consider a parameter $\theta>0$;  moreover, $C_{\theta}$
will denote suitable positive constants such that $C_{\theta}\to 0$ as $\theta\to +\infty$.
 Similarly,
 $C(\lambda)$ will denote suitable constants (possibly depending on $N$)
  such that $C(\lambda)\to 0$ as
  $\lambda\to \infty$.
 
From the previous inequality we deduce, multiplying by $ e^{-q\theta
t}$, for any $\theta >0,$
\begin{gather} \label{serve}
 {\mathbb E}\big[  e^{-q\theta t} | \widetilde{X}_{t}-\widetilde{Y}
_{t} |^{q}\big ]  \\ \nonumber
 \leq C \, {\mathbb E}\Big[  \Big\vert \left(  \lambda-A\right)  \int_{0}^{t}
e^{-\theta\left(  t-s\right)  }e^{\left(  t-s\right)  A}\left(
u_N\left( X_{s}\right)  -u_N\left(  Y_{s}\right)  \right)  e^{-\theta
s}1_{\left[
0,\tau\right]  }\left(  s\right)  ds 
\Big\vert ^{q}\Big]  \\
\nonumber  +\, C \, {\mathbb E}\Big[  \Big(
\int_{0}^{t}e^{-2\theta\left( t-s\right) }\left\Vert e^{\left(
t-s\right)  A}\left(  Du_N\left( X_{s}\right) -Du_N\left( Y_{s}\right)
\right)  \right\Vert ^{2}e^{-2\theta s}1_{\left[  0,\tau\right]
}\left(  s\right) ds\Big) ^{q/2}\Big]  .
\end{gather}
Now proceeding as in the proof of Theorem 7 of \cite{DFPR} we arrive at
\begin{gather} \label{serve2}
 \int_0^T{\mathbb E}\Big[  e^{-q\theta t}\left\vert \widetilde{X}_{t}-\widetilde{Y}
_{t}\right\vert ^{q}\Big] dt  \\ \nonumber
  \leq C(\lambda)\, \int_{0}^{T}e^{-q\theta s}{\mathbb E}|\tilde{X}_{s}-\tilde{Y}
_{s}|^{q} \, ds+ \tilde C_{\theta} \, {\mathbb
E} \Big[ \Lambda_T \cdot \int_{0}^{T}e^{-q\theta s}|\tilde{X}_{s}-\tilde {Y}_{s}|^{q}ds \Big]
\end{gather}
 provided that    $q \in (4, \infty)$,
  $\gamma = q/2$, $\theta \ge \lambda$
 and
\begin{gather} \label{ztr}
\nonumber \Lambda_{T}  
:=\int_{0}^{T} dt \int_{0}^{t}1_{[0,\tau]}(s)
\,ds \int_{0}^{1}\Big(
\sum_{n\geq1}\frac{1}{\lambda_{n}^{{1 - \delta}}}\Vert D^{2}u^{(n)}_N(Z_{s}^{r}
)\Vert^{2}\Big)  ^{\gamma}\,dr,
\\  \text{where} \;  Z_{t}^{r} =
 rX_{t}+(1-r)Y_{t},
 \end{gather}
 and $D^{2}u^{(n)}_N(x)$ is defined for $\mu-$a.e. $x \in H$. The existence of
 $D^{2}u^{(n)}_N \in L^p(\mu)$, $p \ge 2$,
  follows from Proposition \ref{p2} applied to
 equation \eqref{e21} (see also Lemma 23 in \cite{DFPR}).

 Since
 \begin{align*}
\Lambda_{T}    \leq T\cdot\int_{0}^{T\wedge\tau}\, ds \int_{0}^{1}\Big(
\sum_{n\geq1}\frac {1}{\lambda_{n}^{{1 - \delta}}}\Vert
D^{2}u^{(n)}_N(Z_{s}^{r})\Vert^{2}\Big) ^{\gamma}\,dr
\end{align*}
 it is natural to define, for any $R>0$, the stopping time
\[
 \bar \tau_{R}^{x, N}\, =\inf\left\{  t\ge 0  \, : \,  \int_{0}^{t}\,ds \int_{0}%
^{1}\Big(  \sum_{n\geq1}\frac{1}{\lambda_{n}^{{1 - \delta}}}\Vert D^{2}
u^{(n)}_N(Z_{s}^{r})\Vert^{2}\Big)^{\gamma}\,dr \geq
R\right\} \, \wedge \, T.
\]
 Take $\eta=\bar \tau_{R}^{x, N}$
in the previous expressions so that
$$
\tau =   \bar \tau_{R}^{x, N} \wedge \tau_N^X \wedge \tau_N^Y.
$$
 We get from \eqref{serve2}
\begin{gather*}
  \int_{0}^{T}e^{-q\theta t}{\mathbb E}|\tilde{X}_{t}-\tilde{Y}_{t}|^{q}dt\\
 \leq C(\lambda)\, \int_{0}^{T}e^{-q\theta s}{\mathbb E}|\tilde{X}_{s}-\tilde{Y}
_{s}|^{q}ds+ \tilde C_{\theta}\, R\, \int_{0}^{T}e^{-q\theta s}{\mathbb
E}|\tilde{X}_{s}-\tilde {Y}_{s}|^{q}ds.
\end{gather*}
Now we fix $\lambda$ large enough such that $ C (\lambda)  <1/2$. For sufficiently large $\theta=\theta_{R} \ge \lambda$, \textit{depending on
}$R$ and $N$, we have $\tilde C_{\theta}R <1/2$ and so
\[
{\mathbb E}\Big[\int_{0}^{T}e^{-q\theta_{R}\,t}
\, 1_{[0,\tau_{}]}(t)\,|X_{t}-Y_{t}
|^{q}dt\Big]={\mathbb E}\Big[\int_{0}^{\tau_{}}e^{-q\theta_{R}\,t}\,|X_{t}-Y_{t}%
|^{q}\, dt\Big]=0.
\]
In other words, for every $R>0$, $N \ge 1,$ $\mathbb{P}$-a.s., $X=Y$ on
$\big[ 0, \bar \tau_{R}^{x, N} \wedge \tau_N^X \wedge \tau_N^Y \big]  $ (identically in $t$, not only a.e. in $t$, since $X$ and
$Y$ are continuous processes).

If we prove that
\begin{align} \label{gol}
\lim_{R\rightarrow\infty}\bar \tau_{R}^{x, N}=
T \wedge \tau_N^X \wedge \tau_N^Y,  \;\;\; \mathbb{P}-a.s.,
\end{align}
 then we obtain that
$ X = Y  \;\; \text{on} \;\; [0, T \wedge \tau_N^X \wedge \tau_N^Y]$ and this
 finishes
the proof.


\smallskip
The crucial assertion \eqref{gol}  follows by the next proposition.


\begin{remark} \label{mi2}{\em
 Assertion \eqref{gol}  is a  ``local version'' of Proposition 10 in \cite{DFPR}.
Similarly to \eqref{xht} also in the next proof we have to  find an auxiliary process (see \eqref{r8}) which allows  to apply the Girsanov theorem.
}
\end{remark}

\begin{proposition}
\label{stop}  Let $N \ge 1 $ and $T>0$
 and suppose $X$ and $Y$ as in Theorem \ref{main theorem}. For $\mu$-a.e. $x\in H$, we have
   $\mathbb{P}\big(S_{T\wedge \tau_N^X \wedge \tau_N^Y}^x <\infty\Big)=1$, where
 \[
S_{t}^{x}=S_{t}^{x,N}=\int_{0}^{t}\,\int_{0}^{1}\Big( \sum_{n\geq1}\frac
{1}{\lambda_{n}^{{1 - \delta}}}\Vert
D^{2}u^{(n)}_N(Z_{s}^{r})\Vert^{2}\Big) ^{\gamma}\,dr\;ds, \;\;\; t \in [0,T],
\]
 with $\gamma = q/2$ ($
   u_N(x) = \sum_{i \ge 1} u^{(i)}_N(x)e_i$, $x \in H$, where
 $u^{(i)}_N = u^{(i)}_{N, \lambda} $
  solves \eqref{e21}$)$.
\end{proposition}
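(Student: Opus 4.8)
The plan is to estimate the expectation $\mathbb{E}\,S^x_{T\wedge \tau_N^X \wedge \tau_N^Y}$ and show it is finite for $\mu$-a.e.\ $x$, which immediately gives $S^x_{T\wedge \tau_N^X \wedge \tau_N^Y}<\infty$ $\mathbb{P}$-a.s.\ for those $x$. The integrand involves $D^2 u^{(n)}_N$ evaluated along the random path $Z^r_s = rX_s + (1-r)Y_s$, which is a convex combination of the two mild solutions, so the first task is to bring the spatial integral $\sum_n \lambda_n^{-(1-\delta)}\|D^2 u^{(n)}_N(\cdot)\|^2$ under control as an $L^p(\mu)$ function. By Proposition \ref{p2} applied to equation \eqref{e21}, for each $n$ we have $D^2 u^{(n)}_N \in L^p(H,\mu;\cdot)$ for every $p\ge 2$ with $\int_H \|D^2 u^{(n)}_N(x)\|^p\,\mu(dx) \le c\int_H |B^{(i)}_N(x)|^p\,\mu(dx)$; summing against the trace-class weights $\lambda_n^{-(1-\delta)}$ and using Hypothesis 1 (that $(-A)^{-1+\delta}$ is trace class) should give that $\Phi(x) := \big(\sum_{n\ge 1}\lambda_n^{-(1-\delta)}\|D^2 u^{(n)}_N(x)\|^2\big)^{\gamma}$ lies in $L^m(H,\mu)$ for $m$ large enough, where $\gamma = q/2$. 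This is exactly the kind of Sobolev-regularity-plus-summability estimate carried out in \cite{DFPR} (Lemma 23 and Proposition 10 there), and I would cite/adapt it.

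Next comes the main point: estimating $\mathbb{E}\int_0^{T\wedge \tau_N^X \wedge \tau_N^Y}\int_0^1 \Phi_0(Z^r_s)\,dr\,ds$ where $\Phi_0 = \Phi^{1/1}$ (the unpowered sum is already raised to $\gamma$ in $\Phi$). The obstacle is that $Z^r_s$ is not an Ornstein--Uhlenbeck process, nor even a solution to \eqref{uno}; it is a convex interpolation of $X$ and $Y$, so its law is not obviously absolutely continuous with respect to $\mu$. As Remark \ref{mi2} signals, the remedy is to introduce an auxiliary process and apply Girsanov, exactly as in Step 3 of the proof of Theorem \ref{p4}. Concretely, on the event $\{s \le \tau_N^X\wedge\tau_N^Y\}$ both $X_s$ and $Y_s$ lie in $B(0,N)$, so I would replace $X$ by $\hat X^N$ from \eqref{xht} (driven by the bounded, adapted drift $B_N(X_{\cdot\wedge\tau_N^X})$) and similarly define a companion $\hat Y^N$ driven by $B'_N(Y_{\cdot\wedge\tau_N^Y}) = B_N(Y_{\cdot\wedge\tau_N^Y})$, using \eqref{vu}. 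Since $X_{s\wedge\tau_N^X} = \hat X^N_{s\wedge\tau_N^X}$ and likewise for $Y$, up to the stopping time the interpolation $Z^r_s$ agrees with $r\hat X^N_s + (1-r)\hat Y^N_s$. Under the Girsanov-transformed measure $\widetilde{\mathbb P}_N$ (with bounded, hence exponentially integrable, density $\rho_N$) the pair $(\hat X^N, \hat Y^N)$ becomes a pair of Ornstein--Uhlenbeck processes driven by (different shifts of) the same transformed Wiener process; their joint law on $C([0,T];H\times H)$ is equivalent to that of a Gaussian process, so in particular the law of $r\hat X^N_s + (1-r)\hat Y^N_s$ at each time $s$ is a Gaussian measure equivalent to $\mu$ (by Theorem 11.3 in \cite{DZ}, the same fact used after \eqref{f57}).

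Putting these together: by Hölder's inequality in $(\Omega,\mathbb{P})$ with the conjugate exponent to the one making $\rho_N^{-1}\in L^{\cdot}(\widetilde{\mathbb P}_N)$, one bounds $\mathbb{E}[\Phi_0(Z^r_s)\,1_{\{s\le\tau_N^X\wedge\tau_N^Y\}}]$ by $\|\rho_N^{-1}\|_{L^{p'}(\widetilde{\mathbb P}_N)}\cdot \big(\widetilde{\mathbb E}_N[\Phi_0(r\hat X^N_s+(1-r)\hat Y^N_s)^{p}]\big)^{1/p}$, and the latter, via the density $g^r_s := d(\text{law of } r\hat X^N_s+(1-r)\hat Y^N_s)/d\mu$, becomes $\int_H \Phi_0(y)^p\,g^r_s(y)\,\mu(dy)$. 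A further Hölder step separates $\Phi_0^p\in L^{m/p}(\mu)$ (finite by the first paragraph, choosing $m$ large relative to $q$) from $g^r_s \in L^{(m/p)'}(\mu)$, whose norm must be shown integrable in $(r,s)\in[0,1]\times[0,T]$ — this is a Gaussian computation analogous to the ones bounding $\|d\pi^N_s(x,\cdot)/d\mu\|$ near \eqref{f57}, and it is here that the restriction to $\mu$-a.e.\ $x$ genuinely enters (the constants blow up on a $\mu$-null set of starting points, precisely as in Proposition 10 of \cite{DFPR}). Integrating the resulting bound over $r$, $s$ and recalling $S^x_{T\wedge\tau_N^X\wedge\tau_N^Y} \le S^x_{T}$ restricted to the stopped interval yields $\mathbb{E}\,S^x_{T\wedge\tau_N^X\wedge\tau_N^Y}<\infty$, hence the claim. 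The hardest part is organizing the two nested Girsanov/Hölder estimates so that the exponent $q$ (already constrained by $q>4$ and $\gamma=q/2$ from the BDG step) remains compatible with the integrability exponent $m$ available from Proposition \ref{p2}; I expect this bookkeeping, together with the Gaussian estimate on $\|g^r_s\|_{L^{\cdot}(\mu)}$ uniformly enough in $r,s$ to integrate, to be the technical core, while the identification $Z^r = r\hat X^N + (1-r)\hat Y^N$ up to the stopping time is the conceptual key that makes Girsanov applicable at all.
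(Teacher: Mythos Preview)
Your overall strategy matches the paper's: bound $\mathbb{E}[S^x_{T\wedge\tau_N^X\wedge\tau_N^Y}]$ by replacing $Z^r$ up to the stopping time with an auxiliary process having bounded drift, apply Girsanov to reduce to the OU case, then invoke the $L^p(\mu)$ estimates on $D^2 u^{(n)}_N$ together with the Gaussian density computation from Proposition~10 of \cite{DFPR}. Your identification $Z^r_s = r\hat X^N_s + (1-r)\hat Y^N_s$ on $\{s\le \tau_N^X\wedge\tau_N^Y\}$ is correct and is exactly the right device.

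The gap is in your Girsanov step. You assert that under one transformed measure $\widetilde{\mathbb P}_N$ the pair $(\hat X^N,\hat Y^N)$ becomes a pair of Ornstein--Uhlenbeck processes; this cannot hold, since a single change of measure removes a single adapted drift, while $\hat X^N$ and $\hat Y^N$ carry different bounded drifts. Hence your claim that the joint law is equivalent to a Gaussian law on $C([0,T];H\times H)$ does not follow, and the subsequent conclusion about the marginal of the convex combination is unjustified as written. The paper avoids the pair altogether: for each fixed $r\in[0,1]$ it defines the \emph{single} process
\[
Z^{r,N}_t = e^{tA}x + \int_0^t e^{(t-s)A}\bar B^{r,N}_s\,ds + \int_0^t e^{(t-s)A}\,dW_s,\qquad \bar B^{r,N}_s = rB(X_{s\wedge\tau_N^X\wedge\tau_N^Y}) + (1-r)B(Y_{s\wedge\tau_N^X\wedge\tau_N^Y}),
\]
notes that it agrees with $Z^r$ on $[0,\tau_N^X\wedge\tau_N^Y]$ and has drift bounded by $\|B_N\|_0$, and applies one $r$-dependent Girsanov transformation (density $\rho_{r,N}$) turning $Z^{r,N}$ itself into an OU process. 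The remainder then follows Proposition~10 of \cite{DFPR} verbatim: H\"older against $\rho_{r,N}^{-1}$, reduction to the OU expectation \eqref{df}, and the density estimate \eqref{cruciale} where the $\mu$-a.e.\ restriction on $x$ enters. Your combination $r\hat X^N + (1-r)\hat Y^N$ is itself a process of exactly this ``OU plus bounded drift'' form, so your argument is salvaged by applying Girsanov to that single process (with an $r$-dependent density) rather than to the pair.
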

  \begin{proof}
To prove the assertion we will show that, for $\mu$-a.e. $x\in H$,
\[
{\mathbb E}[S_{T\wedge \tau_N^X \wedge \tau_N^Y}^{x}]<+\infty.
\]
 In the first part of the proof, $x\in H$ is given, without restriction.  Let us consider stopped processes
$$
 X_t^N = X_{t\wedge \tau_N^X \wedge \tau_N^Y},
\;\;\;   Y_t^N = Y_{t\wedge \tau_N^X \wedge \tau_N^Y},
$$
and then we define an auxiliary process  $( Z_t^{r,N})_{t \in [0,T]}$ as follows
 \begin{equation} \label{r8}
 Z_{t}^{r, N}:=e^{tA}x+\int_{0}^{t}e^{\left(  t-s\right)  A}\bar{B}_{s}^{r, N}
ds+\int_{0}^{t}e^{\left(  t-s\right)  A}dW_{s}%
\end{equation}
where  (recall \eqref{d9})
\[
\bar{B}_{s}^{r, N}:=[r B( X_{s}^N)+(1-r)B(Y_{s}^{N})],\;\;\;r\in\lbrack0,1],\;\; s \in [0,T].
\]
Comparing $Z^{r}$ (see \eqref{ztr}) and $Z^{r,N}$ we see that  $Z_{s \wedge \tau_N^X \wedge \tau_N^Y }^r = Z_{s \wedge \tau_N^X \wedge \tau_N^Y}^{r,N}$, $s \in [0,T]$, $r \in [0,1]$.
 Hence  we have to prove
$$
  \mathbb{E}[ S_{T\wedge \tau_N^X \wedge \tau_N^Y}^{x}]=\mathbb{E} \int_{0}^{T\wedge \tau_N^X \wedge \tau_N^Y}\,\int_{0}^{1}\Big( \sum_{n\geq1}\frac
{1}{\lambda_{n}^{{1 - \delta}}}\Vert
D^{2}u^{(n)}_N(Z_{s}^{r, N})\Vert^{2}\Big) ^{\gamma}\,dr\;ds < \infty.
$$
This follows if we can show that
\begin{equation}\label{te2}
 \mathbb{E} \int_{0}^{T}\,\int_{0}^{1}\Big( \sum_{n\geq1}\frac
{1}{\lambda_{n}^{{1 - \delta}}}\Vert
D^{2}u^{(n)}_N(Z_{s}^{r, N})\Vert^{2}\Big) ^{\gamma}\,dr\;ds < \infty.
\end{equation}
 We fix $N \ge 1$.
To verify \eqref{te2} we can   follow
the proof  of Proposition 10 in \cite{DFPR}. We only indicate the small changes which are needed.

Define
\[
\rho_{r, N}=\exp\left(
-\int_{0}^{t}\bar{B}_{s}^{r, N}dW_{s}-\frac{1}{2}\int
_{0}^{t}|\bar{B}_{s}^{r, N}|^{2}ds\right)  .
\]
We have, since $|\bar{B}_{s}^{r}|\leq\Vert B_N\Vert_{0}$, $r \in [0,1]$, $s \ge 0$, $\P$-a.s.,
\begin{equation}
{\mathbb E}\left[  \exp\left(
k\int_{0}^{T}|\bar{B}_{s}^{r, N}|^{2}ds\right) \right]
\leq C_{k}<\infty,\label{estimate easy}%
\end{equation}
for all $k\in\mathbb{R}$, independently of $x$ and $r$, simply
because $B_N$ is bounded. Hence an infinite dimensional version of
Girsanov's Theorem with respect to a cylindrical Wiener process (the
proof of which is included in the Appendix of \cite{DFPR})
applies and gives us that
\[
\tilde W_{t}^N:=W_{t}+\int_{0}^{t}\bar{B}_{s}^{r,N}ds
\]
is a cylindrical Wiener process on $\left(  \Omega, {\mathcal F},
\left( {\mathcal F}_{t}\right) _{t\in\left[  0,T\right]
},\widetilde{\mathbb{P}}_{r,N}\right)  $ where
$\left.  \frac{d\widetilde{\mathbb{P}}_{r,N}}{d\mathbb{P}}\right\vert _{{\mathcal F}_{T}%
}=\rho_{r,N}$. Hence
\[
Z_{t}^{r,N}=e^{tA}x+\int_{0}^{t}e^{\left(  t-s\right)  A}d\tilde W_{s}^N%
\]
is  an OU process on $\left(  \Omega, {\mathcal F},
\left( {\mathcal F}_{t}\right) _{t\in\left[  0,T\right]
},\widetilde{\mathbb{P}}_{r,N}\right)  $.
 Continuing as in the proof of Proposition 10 in \cite{DFPR} with $u^{(n)}$ replaced by $u^{(n)}_N$, $\rho_r$ replaced by $\rho_{r,N}$, $Z^r_s$ replaced by $Z^{r,N}_s$, $\bar{B}_{s}$ replaced by $\bar{B}_{s}^{r,N}$,  we see that \eqref{te2} holds if we prove that
\begin{align} \label{df}
\int_{0}^{T}{\mathbb E}\,\Big[  \Big(  \sum_{n\geq1}\frac{1}{\lambda_{n}^{{1 - \delta}}%
}\Vert D^{2}u^{(n)}_N(e^{sA}x+W_{A}(s))\Vert^{2}\Big)
^{2\gamma}\Big] ds<\infty,
\end{align}
 where $W_A(t) = \int_0^t e^{(t-s)A} dW(s), \;\; t \ge 0.$
If  $\mu_{s}^{x}$ denotes the law of $e^{sA}x+W_{A}\left( s\right)
$, we
have to prove that%
\begin{align} \label{cruciale}
\int_{0}^{T}\int_{H}\,\Big( \sum_{n\geq1}\frac{1}{\lambda_{n}^{{1 -
\delta}}}\Vert D^{2}u^{(n)}_N(y)\Vert^{2}\Big)
^{2\gamma}\mu_{s}^{x}(dy)ds<\infty.
\end{align}
This can be checked as in the mentioned proof (see in particular Steps 3 and  4 in that proof) {\it  only for $\mu$-a.e. $x \in H$};  one has   to replace $B$ in the proof in \cite{DFPR} with our $B_N$.
\end{proof}

\begin{remark} {\rm  As is easily checked in Theorem \ref{main theorem} the ball $B(0,N)$ can be replaced by any open bounded set in $H$.}
\end{remark}

\begin{remark} {\em
According to Remark 11 in \cite{DFPR} our Theorem \ref{main theorem} provides an alternative approach to  Veretennikov's uniqueness result in finite dimension.
 In this respect first note that Theorem \ref{main theorem}
  when $H = \R^d$ does not require to start from $\mu$-a.e. initial conditions $x$, but works for any initial $x \in \R^d$.

 Note that
 in finite dimension an
  SDE like $dX_t = b(X_t)dt + dW_t$
  with $b$ Borel and bounded is equivalent to
  $dX_t = - X_t dt +  (b(X_t)+ X_t) dt + dW_t$ which is in the form \eqref{uno}
 with $A = -I,$  and with a   drift term $B(x) = b(x) + x$ which is completely covered by Theorem \ref{main theorem}.

Recall that in this alternative approach  to Veretennikov's result,   basically the elliptic $L^p$-estimates with respect to  Lebesgue measure used in \cite{Ver}  are replaced by elliptic $L^p{(\mu)}$-estimates.
}
\end{remark}

\section{ Existence of strong mild solutions }

Here we will use our Theorem \ref{main theorem} to prove existence of strong mild solutions when $B$ grows more than linearly. We will construct such solutions for $\mu$-a.e. initial $x \in H$.

According to Chapter 1 in \cite{O}  (see also \cite{LR}) if $x \in H$ we say that
equation \eqref{uno}  has a (global) {\it strong mild solution} if, for every filtered probability space
$(\Omega,{\cal F}, ({\cal F}_t), \P)$ on which there is defined a cylindrical
$({\cal F}_t)$-Wiener process $W$
there exists an $H$-valued continuous $({\cal F}_t)$-adapted
process $X= (X_t)= (X_t)_{t \ge 0}$ such that
$$(\Omega ,{\cal F}, ({\cal F}_t), \P,W, X)
$$
is a weak mild solution.

\begin{theorem} \label{bt} Let us consider equation \eqref{uno} and assume
 Hypothesis 1 and $B
\in {B_{b, loc}}(H,H)$. Moreover, suppose that
 there
exist $C>0$, $p>0$, such that
\begin{equation}
\left\langle B(  y+z)  ,y\right\rangle \leq C\big(  \left\vert
y\right\vert ^{2}+ e^{p | z |} +1\big),\;\;
 y,\, z \in H.  \label{condition 2}
\end{equation}
Then, for    $\mu$-a.e. $ x \in H$
 (where $\mu = N(0, -\frac{1}{2}A^{-1}))$,
 equation \eqref{uno}  has  a strong mild solution.
 Moreover, this solution is pathwise unique.
\end{theorem}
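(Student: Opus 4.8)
The plan is to combine the pathwise uniqueness from Theorem \ref{main theorem} with an a priori estimate derived from the dissipativity-type condition \eqref{condition 2}, together with the well-known existence of weak mild solutions for truncated drifts. First I would fix a filtered probability space $(\Omega, {\cal F}, ({\cal F}_t), \P)$ carrying a cylindrical $({\cal F}_t)$-Wiener process $W$, and for each $N \ge 1$ consider the truncated drift $B_N = B 1_{B(0,N)}$, which is Borel and bounded. Since $B_N \in B_b(H,H)$, by the results recalled in the introduction (Chapter 10 in \cite{DZ}, and \cite{Fe,GG}) equation \eqref{uno} with $B$ replaced by $B_N$ has a weak mild solution, and since two bounded drifts coincide on $B(0,N)$ whenever the larger truncation radius is used, Theorem \ref{main theorem} (applied with $B = B_N$, $B' = B_M$ for $M \ge N$) gives, for $\mu$-a.e. $x$, a consistent family: if $X^N$ solves \eqref{uno} with drift $B_N$ then $X^N_{t \wedge \tau_N^{X^N}} = X^M_{t \wedge \tau_N^{X^N}}$ for $M \ge N$ on the relevant events. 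By the Yamada--Watanabe theorem (\cite{LR,O}) each such equation in fact has a strong mild solution $X^N$ adapted to $(\bar{\cal F}_t^W)$. The idea is then to glue the $X^N$ along the increasing stopping times $\tau_N^{X^N}$ to produce a process $X$ defined up to $\sup_N \tau_N^{X^N}$.

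The heart of the argument is then to show that this gluing produces a \emph{global} solution, i.e. that $\tau_N^{X^N} \uparrow +\infty$ as $N \to \infty$, $\P$-a.s., for $\mu$-a.e. $x$; equivalently the glued process does not explode. This is where condition \eqref{condition 2} enters. I would apply an Itô-type formula to $|X^N_t|^2$ (or to $e^{-\beta t}|X^N_t|^2$ for a suitable $\beta$), using the decomposition $X^N_t = Z_t + (X^N_t - Z_t)$ where $Z$ is the Ornstein--Uhlenbeck process and $Y^N_t := X^N_t - Z_t$ solves a random integral equation with drift $B_N(Y^N_t + Z_t)$ and no noise, hence is differentiable in $t$. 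Computing $\frac{d}{dt}|Y^N_t|^2 = 2\langle B_N(Y^N_t + Z_t), Y^N_t\rangle$ (on the set where $X^N_t \in B(0,N)$, so $B_N = B$ there), and invoking \eqref{condition 2} with $y = Y^N_t$, $z = Z_t$, gives $\frac{d}{dt}|Y^N_t|^2 \le 2C(|Y^N_t|^2 + e^{p|Z_t|} + 1)$. Gronwall's lemma then yields a bound $|Y^N_t|^2 \le e^{2Ct}(|x|^2 + 2C\int_0^t (e^{p|Z_s|}+1)\,ds)$ that is \emph{independent of $N$} on $[0, \tau_N^{X^N}]$. Since the OU process $Z$ has a continuous $H$-valued version and $\E \sup_{s \le T} e^{p|Z_s|} < \infty$ for all $T$ (Fernique-type estimate), one concludes $\sup_{t \le T}|X^N_t|$ has a bound independent of $N$ on $[0, T \wedge \tau_N^{X^N}]$, which forces $\P(\tau_N^{X^N} \le T) \to 0$, hence no explosion.

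I expect the main obstacle to be technical rather than conceptual: carefully setting up the Itô / chain-rule computation for $|Y^N_t|^2$ so that it is rigorous for a drift that is merely measurable and locally bounded (the map $t \mapsto Y^N_t$ is absolutely continuous with a.e.-defined derivative, so one must argue at the level of the integral equation rather than formally differentiating), and making sure the localisation by $\tau_N^{X^N}$ interacts correctly with the a priori bound — one needs the bound on $[0, \tau_N^{X^N}]$ to be uniform in $N$ so that letting $N \to \infty$ closes the argument. A second point requiring care is the dependence on the initial condition: Theorem \ref{main theorem} and hence the strong-solution construction only hold for $\mu$-a.e. $x$, and since there are countably many $N$, the exceptional null sets can be taken in union, so the final statement remains ``for $\mu$-a.e. $x$''. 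Once global existence of the glued process $X$ is established, pathwise uniqueness of $X$ follows directly from Theorem \ref{main theorem}: two global mild solutions $X, Y$ starting at the same ($\mu$-typical) $x$ with the same drift $B$ satisfy \eqref{rgg} for every $N$, and letting $N \to \infty$ using $\tau_N^X, \tau_N^Y \uparrow +\infty$ gives $X \equiv Y$, exactly as in \eqref{ci88}. Finally, the strong (i.e. $(\bar{\cal F}_t^W)$-adapted) property of $X$ is inherited from the strong property of each $X^N$ together with the fact that $\tau_N^{X^N}$ is an $(\bar{\cal F}_t^W)$-stopping time, so the gluing stays adapted to the completed Brownian filtration.
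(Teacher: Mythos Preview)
Your proposal is correct and follows essentially the same route as the paper: truncate to bounded $B_N$, invoke existence of weak mild solutions and Yamada--Watanabe to obtain strong solutions $X^N$, use Theorem \ref{main theorem} for consistency of the $X^N$ along the exit times $\tau_N$, and close with an $N$-independent a priori bound obtained from the decomposition $X^N = Z + Y^N$ together with \eqref{condition 2} and Gronwall. One small slip: your formula $\frac{d}{dt}|Y^N_t|^2 = 2\langle B_N(Y^N_t + Z_t), Y^N_t\rangle$ should be an inequality, since $Y^N$ also carries the linear part $A Y^N$ (which contributes $2\langle A Y^N_t, Y^N_t\rangle \le 0$); the paper handles this by projecting onto the eigenbasis $(e_k)$ and summing, which also avoids the issue that $Y^N_t$ need not lie in $D(A)$.
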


\begin{remark} \label{ci3} {\em Condition \eqref{condition 2} is a bit stronger than the classical one:
 $\langle B\left(  y\right)  ,y\rangle $ $\leq C\big(  \left\vert
y\right\vert ^{2} +1\big),$  $y \in H$, which is usually imposed in finite dimension to have non-explosion for SDEs with additive noise. We can not use   such condition.
 Indeed for a given mild solution $(X_t) $    we can not write the It\^o
formula for   $|X_t|^2$ due to the fact that our noise is cylindrical.}
\end{remark}

 To prove the result we will use our Theorem \ref{main theorem}
 together with a generalization of the  Yamada-Watanabe theorem (see Theorem 2 in \cite{O} and \cite{LR}) and some a-priori estimates on mild solutions (see Section 4.1).

\begin{example}
{\em  To introduce  an example of a drift $\tilde B$
which satisfies the  assumptions of Theorem \ref{bt}, we first
 consider
a measurable function $g: \R \to \R_+$ such that $g(s) = 0$ if $s \ge 0$ and
 $0 \le g(s) \le C e^{q |s|}$, $s <0$, for some $q>0$. It is easy to check that
 $g(s+ r) r \le C' (1+ |r|^2 + e^{p |s|})$, $s, r \in \R$.
We define $\tilde B : H \to H$,
$$
 \tilde B(x) = \sum_{k \ge 1} \frac{g(x_k)}{k^2} e_k,\;\;\; x \in H.
 $$
It is not difficult to verify that $ \tilde B$ satisfies the assumptions of the previous theorem.  We can also add to our drift $\tilde B$ one of  the singular drifts considered in
Section 4 of \cite{DFPR}; we will still obtain an admissible  drift for our theorem.
}
\end{example}

\subsection{ An a-priori estimate}

 Here we  prove an a-priori estimate
for mild solutions to \eqref{uno} under condition \eqref{condition 2}.
 For this purpose let us consider the OU process
\begin{align*}
Z_t = Z(t,x)= e^{tA} x + \int_0^t e^{(t-s)A} dW_s
\end{align*}
which under our hypotheses has a continuous $H$-valued version. It satisfies
\[
\left\langle Z_{t},\varphi\right\rangle =\int_{0}^{t}\left\langle
Z_{s},A\varphi\right\rangle ds+\left\langle W_{t},\varphi\right\rangle
\]
for all $\varphi\in D\left(  A\right)  $. By Proposition 18 in \cite{DFPR}
we deduce, in particular, that for any $p>0$, $T>0$
\[
K_T := \E\Big[  \sup_{t\in\left[  0,T\right]  }e^{p \left\vert Z_{t}\right\vert
}\Big]  <\infty.
\]
 Recall  that under our hypotheses a weak mild solution to
\eqref{uno} can be defined, equivalently,
 as a tuple  $\left(
\Omega, {\mathcal F},
 ({\mathcal F}_{t}), \P, W, X\right) $, where $\left(
\Omega, {\mathcal F},
 ({\mathcal F}_{t}), \P \right)$ is a  filtered probability space
  on which there is defined a
cylindrical $({\mathcal F}_{t})$-Wiener process $W$ and
 a continuous $({\mathcal F}_{t})$-adapted $H$-valued
process $X = (X_t) = (X_t)_{t \ge 0}$ such that, $\P$-a.s.,
  \[
\left\langle X_{t},\varphi\right\rangle =\left \langle x,\varphi
\right\rangle +\int_{0}^{t}(\left\langle X_{s},A\varphi\right\rangle
+\left\langle B\left(  X_{s}\right)  ,\varphi\right\rangle) ds+\left\langle
W_{t},\varphi\right\rangle,\;\; t \ge 0,
\]
for all $\varphi\in D\left(  A\right)  $ (cf. Chapter 6 of \cite{DZ}).

\begin{theorem} \label{fra2} Assume Hypothesis 1, $B \in B_{b, loc}(H,H) $ and
condition (\ref{condition 2}). Let
 $X= (X_t)_{t \ge 0}$ be a weak mild solution of equation (\ref{uno})
 with $X_0 =x \in H$.

 There exists $ C_p  >0$ (possibly depending on $C$ and $p$ given
 in \eqref{condition 2}) such that
\begin{equation} \label{fra1}
\E\Big[  \sup_{t\in\left[  0,T\right]  }\left\vert X_{t}\right\vert
^{2}\Big]  \leq   e^{C_p T} \left(
|x|^2 +   K_T  +1 \right),\;\; T>0.
\end{equation}
\end{theorem}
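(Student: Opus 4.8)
The plan is to strip the Ornstein--Uhlenbeck part off $X$ and run a purely pathwise energy estimate on the remainder; one cannot estimate $|X_t|^2$ directly since the noise is cylindrical (cf. Remark \ref{ci3}). I would set $Z_t=Z(t,x)=e^{tA}x+\int_0^t e^{(t-s)A}\,dW_s$ and
\[
Y_t:=X_t-Z_t=\int_0^t e^{(t-s)A}B(X_s)\,ds ,
\]
so that $Y_0=0$, $Y$ is continuous, and $X_s=Y_s+Z_s$. Fix $T>0$. Since $X$ has continuous $H$-valued paths, for $\P$-a.e. $\omega$ the set $\{X_s(\omega):s\in[0,T]\}$ is bounded, and then, because $B\in B_{b,loc}(H,H)$, the forcing $s\mapsto B(X_s(\omega))$ is bounded and Borel on $[0,T]$. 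Thus, pathwise, $Y$ is the mild solution of $\dot Y=AY+B(X_\cdot)$, $Y_0=0$, with bounded right-hand side.

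The key step is an energy inequality for $Y$. Writing $A_n=nA(n-A)^{-1}$ for the Yosida approximants (bounded, self-adjoint, negative definite) and $Y^n_t=\int_0^t e^{(t-s)A_n}B(X_s)\,ds$ for the corresponding genuine $H$-valued ODE solutions, one has $\frac{d}{ds}|Y^n_s|^2=2\langle A_nY^n_s,Y^n_s\rangle+2\langle B(X_s),Y^n_s\rangle\le 2\langle B(X_s),Y^n_s\rangle$; integrating and letting $n\to\infty$ (using $Y^n_t\to Y_t$ in $H$, $\sup_n\sup_{s\le T}|Y^n_s|<\infty$ $\P$-a.s., hence dominated convergence) gives, pathwise,
\[
|Y_t|^2\le 2\int_0^t\langle B(Y_s+Z_s),Y_s\rangle\,ds\le 2C\int_0^t\bigl(|Y_s|^2+e^{p|Z_s|}+1\bigr)\,ds,\qquad t\in[0,T],
\]
the last inequality being exactly hypothesis \eqref{condition 2} with $y=Y_s$, $z=Z_s$ (and $\langle AY,Y\rangle\le 0$ since $A$ is negative definite). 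Gronwall's lemma then yields, with $M:=\sup_{s\in[0,T]}e^{p|Z_s|}+1<\infty$ $\P$-a.s.,
\[
\sup_{t\in[0,T]}|Y_t|^2\le 2CT e^{2CT} M ,
\]
and, taking expectations and recalling $K_T=\E[\sup_{t\in[0,T]}e^{p|Z_t|}]$, one gets $\E[\sup_{t\le T}|Y_t|^2]\le 2CT e^{2CT}(K_T+1)$.

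To finish I would combine this with a bound for $Z$: from the elementary inequality $r^2\le c_p e^{pr}$ ($r\ge0$, some $c_p=c_p(p)>0$) one has $\sup_{t\le T}|Z_t|^2\le c_p\sup_{t\le T}e^{p|Z_t|}$, so $\E[\sup_{t\le T}|Z_t|^2]\le c_pK_T$. Then from $|X_t|^2\le 2|Y_t|^2+2|Z_t|^2$,
\[
\E\Bigl[\sup_{t\in[0,T]}|X_t|^2\Bigr]\le 4CT e^{2CT}(K_T+1)+2c_pK_T ,
\]
and absorbing the polynomial and constant factors into an exponential gives \eqref{fra1} with a suitable $C_p=C_p(C,p)$ (the term $|x|^2$ on the right of \eqref{fra1} is harmless, being dominated by $K_T\ge e^{p|x|}$).

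The one point that is not routine — hence what I expect to be the main obstacle — is the energy inequality for $Y$, since $Y$ is only a mild (not strong) solution: this is precisely why the decomposition $X=Y+Z$ is introduced, and it is handled by the Yosida (equivalently, variational, on the Gelfand triple $D((-A)^{1/2})\subset H\subset D((-A)^{1/2})'$) approximation above. It is also essential, and easily checked, that the random bound on the drift path $B(X_\cdot)$ never enters the final estimate: only $K_T$ does, and it is finite by Proposition 18 of \cite{DFPR}.
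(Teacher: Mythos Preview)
Your proof is correct and follows essentially the same strategy as the paper: decompose $X=Y+Z$ with $Z$ the Ornstein--Uhlenbeck part, derive a pathwise energy inequality for $|Y_t|^2$, feed in hypothesis \eqref{condition 2}, apply Gronwall, and recombine. The only notable difference is how the energy inequality for the mild solution $Y$ is justified: you use Yosida approximants $A_n$, while the paper exploits the eigenbasis $(e_k)$ of $A$ (available under Hypothesis~1), projects onto each $e_k$ to obtain scalar ODEs for $Y_t^{(k)}$, uses $-\lambda_k (Y_t^{(k)})^2\le 0$ componentwise, and then sums over $k$; the paper's route is marginally more elementary here since the spectral decomposition is already at hand, whereas your Yosida argument would work equally well without a discrete spectrum.
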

\begin{proof}
The process
 $
Y_{t}=X_{t}-Z_{t}%
$
satisfies%
\begin{equation} \label{fra}
\left\langle Y_{t},\varphi\right\rangle =\left\langle x,\varphi
\right\rangle +\int_{0}^{t}(\left\langle Y_{s},A\varphi\right\rangle
+\left\langle B\left(  Y_{s}+Z_{s}\right)  ,\varphi\right\rangle) ds,
\end{equation}
for all $\varphi\in D\left(  A\right)  $, and it has continuous trajectories in
$H$.
 Let us consider \eqref{fra} with $\varphi = e_k$ (see \eqref{e1a}) and
  set $ Y_t^{(k)}= \langle Y_t, e_k\rangle$. Since $
  \frac{d Y_{t}^{(k)}}{dt}
  \cdot  Y_{t}^{(k)}$ $\le  B^{(k)}(Y_t + Z_t) Y_{t}^{(k)} $,
  we
find
$$
\sum_{k \ge 1} \langle Y_t, e_k\rangle ^2 \le  |x|^2 + 2\int_0^t
\left\langle B\left(
Y_{s}+Z_{s}\right)  ,Y_{s}\right\rangle ds.
$$
Hence, by assumption (\ref{condition 2}), for $t \in [0,T]$,
$$
|Y_t|^2  \le |x|^2 +  2 C \int_0^t \left(  \left\vert
Y_s\right\vert ^{2}+ e^{p | Z_s |} +1\right) ds,
$$
and therefore, by the Gronwall lemma,
\begin{align*}
\left\vert Y_{t}\right\vert ^{2}
  \leq e^{CT}\Big(  \left\vert x \right\vert ^{2}+ 2CT (\sup_{s \in \left[
0,T\right] }  e^{p \left\vert
Z_{s}\right\vert } +1) \Big).
\end{align*}
This implies%
\[
\E \Big[  \sup_{t\in\left[  0,T\right]  }\left\vert Y_{t}\right\vert
^{2}\Big]  \leq e^{C_1 T}\Big( \left\vert x\right\vert
^{2}  + \E\Big[  \sup_{s \in \left[
0,T\right] } e^{p \left\vert
Z_{s}\right\vert } \Big]  +1\Big).
\]
Therefore,
\begin{align*}
\E\Big[  \sup_{t\in\left[  0,T\right]  }
\left\vert X_{t}\right\vert
^{2}\Big]    & \leq
2 e^{C_1 T}\Big( \left\vert x\right\vert
^{2}  + \E\Big[  \sup_{s \in \left[
0,T\right] } e^{p \left\vert
Z_{s}\right\vert } \Big]  +1\Big)
  +2 \E\Big[  \sup_{t\in\left[
0,T\right]  }\left\vert Z_{t}\right\vert ^{2}\Big]  \\
& \le   e^{C_p T}\Big( \left\vert x\right\vert
^{2}  + \E\Big[  \sup_{s \in \left[
0,T\right] } e^{p \left\vert
Z_{s}\right\vert } \Big]  +1\Big).
\end{align*}
\end{proof}
\subsection {Proof of Theorem \ref{bt}}

 By Theorem \ref{main theorem} we only  have to prove existence of strong solution for
$\mu$-a.e.  $x \in H$.

We will again consider truncated bounded drifts $B_N = B \, 1_{B(0,N)}$, $N \ge 1$.

By the main result in  \cite{DFPR} there exists
a Borel set $\tilde G \subset H$ with $\mu (\tilde G) =1$ such that for any $x \in
 \tilde G$ we have
pathwise uniqueness for each stochastic equation
\begin{equation} \label{ss}
dX_{t}=(AX_{t}+B_N(X_{t}))dt+dW_{t},\qquad X_{0}=x\in H,
\end{equation}
 $N \ge 1$.
 Let $x \in \tilde G$. By the Girsanov theorem (see Appendix in \cite{DFPR})
 there exists (a unique in law) weak mild solution
 $X_N=(X_N(t))= (X_N(t))_{t\ge 0}$ for each stochastic
 equation \eqref{ss}.

Therefore we can apply  a generalization of the  Yamada-Watanabe theorem
 (see Theorem 2 in \cite{O} and \cite{LR}) to \eqref{ss} when $x \in \tilde G$.

  Let us fix any filtered probability space $\left(
\Omega, {\mathcal F},
 ({\mathcal F}_{t}), \P \right)$
  on which there is defined a
cylindrical $({\mathcal F}_{t})$-Wiener process $W$.
 By the Yamada-Watanabe theorem,  for any $N \ge 1$,
on the fixed
filtered probability space above there exists a (unique) strong mild solution
 $X_N$ to \eqref{ss}. Moreover, since
$$
B_N (x) = B_{N+k}(x),\;\;\; x \in B(0,N),
$$
$k \ge 1$, we have by Theorem \ref{main theorem} that, $\P$-a.s.,
$$
\tau_N:=\tau_N^{X_N} = \tau_N^{X_{N+k}}, \;\; k \ge 1, \; N \ge 1,
$$
and $X_N(t \wedge \tau_N)= X_{N + k}(t \wedge \tau_N)$, $t \ge 0$.

 It is enough to  construct the
 strong solution $X$ to \eqref{uno} on $ [0,T]$ for a fixed $T>0$.
We define an $H$-valued  stochastic process $X $ on  $ \Omega' =\cup_{N \ge 1}
\{ \tau_N >T \}$  as
$$
X(t)(\omega) := X_N(t)(\omega),\;\;\;  t \in [0,T],
$$
if $\omega \in \{ \tau_N >T \}$ (we set $X_t(\omega) =0$
  if $\omega \not \in \Omega'$, $t \in [0,T]$). Then $X(t)$ is well defined.

It is not difficult to  prove that $X$ is a strong mild solution
 on $[0,T]$ if we show that
\begin{equation}\label{t4}
    \lim_{N \to \infty}\P (\tau_N >T ) =1
\end{equation}
(this will imply that $\P (\Omega' ) =1$).
To verify \eqref{t4} we will apply  Theorem \ref{fra2}. Note that  each $B_N$  satisfies
$$
\left\langle B_N\left(  y+z\right)  ,y\right\rangle \leq C\big(  \left\vert
y\right\vert ^{2}+ e^{p | z |} +1\big),\;\;
 y,\, z \in H,
$$
with the same constants $C$ and $p$ of \eqref{condition 2}.
 By Theorem \ref{fra2} we obtain
\begin{equation}\label{r555}
\E \Big[  \sup_{t\in\left[  0,T\right]  }\left\vert X_N (t)\right\vert
^{2}\Big]  \leq   e^{C_p T} \left(
|x|^2 +  K_T  +1 \right),
\end{equation}
with $C_p$ independent of $N \ge 1.$  Since
$$
\P (\tau_N  \le T) = \P \big(\sup_{t\in\left[  0,T\right]  }
\left\vert X_N (t)\right\vert
 \ge N \big)
$$
by \eqref{r555} and  the Chebychev inequality  we  easily get
  assertion \eqref{t4} from \eqref{r555}
 and this completes the proof.

\subsection{Existence and uniqueness of local mild solutions}

Finally, let us discuss a possible  extension of our result
to  the case when the drift term $B$  only belongs to $B_{b,loc}\left(
H,H\right)  $,  without  requiring  hypothesis \eqref{condition 2}.

\smallskip
We need the concept of local solution (see, for instance, \cite{ABW} for some
additional facts about local solutions).
Let $\left(  \Omega, \mathcal{F},(\mathcal{F}_{t}),\P\right)  $ be a filtered
probability space. A stopping time $\tau:\Omega\rightarrow\left[
0, +\infty\right]  $ is called \textit{accessible} if there exists a sequence of
stopping times $(\tau_n) =\left(  \tau_{n}\right)  _{n\in\mathbb{N}}$ such that
$\P\left(  \tau_{n}<\tau\right)  =1$ and $\P\left(  \lim_{n\rightarrow\infty
}\tau_{n}=\tau\right)  =1$. The previous sequence $\left(  \tau_{n}\right)$ is called an approximating sequence of $\tau.$

Notice that, if $\tau_{1}$ and $\tau_{2}$ are
accessible stopping times, then also $\tau=\tau_{1}\wedge\tau_{2}$ is an
accessible stopping time.

\smallskip Let $\tau$ be an accessible stopping time and consider
 $[0, \tau) \times \Omega $ $= \{ (t, \omega) \in [0, +\infty) \times \Omega :$ $
  0 \le t < \tau(\omega) \}$.
 An $H$-valued  stochastic process $X$ defined  on
$[0,\tau  )$
  (i.e., $X : [0, \tau) \times \Omega  \to H$)
is called $(\mathcal{F}_t)$-adapted if $X_{t}(\cdot):$
$\{ t < \tau \} \to H$ is $\mathcal{F}_t$-measurable, for any $t \ge 0$
 (on $\{ t < \tau \}$ we consider the restricted $\sigma$-algebra
 $\{ A \cap \{ t < \tau \}\}_{A \in {\cal F}_t}$); moreover, it is called continuous if  trajectories are continuous on
$[0,\tau)$, $\P$-a.s..  Note that
$X$ is $(\mathcal{F}_t)$-adapted if and only if the process $\tilde X =(\tilde X_t)_{t\ge 0}$,
\begin{equation} \label{f78}
 \tilde X_t  = X_t 1_{ \{ \tau > t \} } + 0 \cdot 1_{ \{ \tau \le t \} }
\end{equation}
 is $(\mathcal{F}_t)$-adapted.

\begin{definition} \label{www} Let $x \in H$.
 We call  { \it local weak mild solution}  to
(\ref{uno}) a tuple   $(
\Omega, {\mathcal F},
 ({\mathcal F}_{t}),$ $ \P, W, X, \tau) $, where $\left(
\Omega, {\mathcal F},
 ({\mathcal F}_{t}), \P \right)$ is a  filtered probability space
  on which it is defined a
 cylindrical $({\mathcal F}_{t})$-Wiener process $W$,
  an accessible stopping time $\tau$  and
 a continuous $({\mathcal F}_{t})$-adapted $H$-valued
process $X $ defined on $[0, \tau)$
 such that, there exists an approximating sequence $(\tau_n)$ of $\tau$ for which, $\P$-a.s.,
  on $\{ t \le \tau_n \}$
\begin{equation*}
X_{t}=e^{tA}x+\int_{0}^{t}e^{\left(  t-s\right)  A}B\left(
X_{s}\right)
ds+\int_{0}^{t}e^{\left(  t-s\right)  A}dW_{s},
\end{equation*}
for all $n\in\mathbb{N}$ and $t\geq0$.

A local weak mild solution $X$ which is $(\bar {\cal F}_t^W)$-adapted (here $(\bar{\cal F}_t^W)$ denotes the  completed  natural filtration of the cylindrical process $W$) and such that $\tau$ is an $(\bar {\cal F}_t^W)$-stopping time
is called \textit{a local strong mild solution}.
\end{definition}

\begin{theorem} Assuming  Hypothesis 1 and $B \in B_{b,loc}\left(H,H\right)  $,
existence  of \textit{local} strong mild solutions holds for $\mu$-a.e. initial condition $x \in H$. Moreover, for $\mu$-a.e.  $x \in H$, if $X$ and $Y$ are two local weak mild solutions  on the
same   $\left(  \Omega,\mathcal{F},(\mathcal{F}
_{t}),\P\right)  $ with the same cylindrical Wiener process $W$, defined on   $[0,\tau^{X})$ and $[0,\tau^{Y})$
respectively ($\tau^X$ and $\tau^Y$ are accessible stopping times
 as in Definition
 \ref{www}), then,  $\P$-a.s., $X=Y$ on $[0, \tau)$,
 where $\tau=\tau^{X}\wedge\tau^{Y}$.
\end{theorem}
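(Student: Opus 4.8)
The plan is to reduce, by truncation, to the bounded--drift theory of \cite{DFPR} and Theorem \ref{main theorem}, and then to pass to the limit along the truncations; for $N\ge1$ we keep the notation $B_N=B\,1_{B(0,N)}\in B_b(H,H)$.

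\emph{Existence.} Let $\tilde G\subset H$ be the $\mu$-full Borel set on which \cite{DFPR} yields pathwise uniqueness for each bounded equation \eqref{ss}; by Girsanov and the Yamada--Watanabe theorem (\cite{O,LR}) this gives, for $x\in\tilde G$ and on any fixed filtered probability space carrying a cylindrical $(\mathcal F_t)$--Wiener process $W$, a unique strong mild solution $X_N$ of \eqref{ss}. Set $\tau_N:=\tau_N^{X_N}$. Since $B_N=B_{N+k}$ on $B(0,N)$, Theorem \ref{main theorem} gives $\tau_N^{X_N}=\tau_N^{X_{N+k}}$ and $X_N(\cdot\wedge\tau_N)=X_{N+k}(\cdot\wedge\tau_N)$, $\P$--a.s.; evaluating this identity at $t=\tau_N$ and using path continuity (so that $|X_N(\tau_N)|=N$ on $\{\tau_N<\infty\}$) one sees that $(\tau_N)$ is non-decreasing, and strictly increasing on $\{\tau_N<\infty\}$. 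Put $\tau:=\sup_N\tau_N$. Then $(\tau_N\wedge N)_N$ is an approximating sequence of $\tau$ (distinguish the cases $\tau<\infty$ and $\tau=\infty$ and use the strict monotonicity), and $\tau$ as well as each $\tau_N\wedge N$ is an $(\bar{\mathcal F}_t^W)$--stopping time, since each $X_N$ is $(\bar{\mathcal F}_t^W)$--adapted and $(\bar{\mathcal F}_t^W)$ is right-continuous. Define $X_t:=X_N(t)$ on $\{t<\tau_N\}$; by the compatibility above this is a well defined, continuous, $(\bar{\mathcal F}_t^W)$--adapted process on $[0,\tau)$. On $\{t\le\tau_N\}$ one has $X=X_N$ with $X_s\in B(0,N)$ for $s<\tau_N$, hence $B(X_s)=B_N(X_s)$ for a.e.\ $s\le t$, so the $B_N$--mild equation for $X_N$ turns into the $B$--mild equation \eqref{mild} for $X$; therefore $(\Omega,\mathcal F,(\mathcal F_t),\P,W,X,\tau)$ is a local strong mild solution, with approximating sequence $(\tau_N\wedge N)_N$.

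\emph{Uniqueness.} The key point is a local version of Theorem \ref{main theorem}: if $X$ and $Y$ are two local weak mild solutions of \eqref{uno} on $[0,\tau^X)$ and $[0,\tau^Y)$, on the same space with the same $W$, then for $\mu$--a.e.\ $x$, every $N\ge1$, and $\P$--a.s., $X_{t\wedge\sigma_N}=Y_{t\wedge\sigma_N}$ for all $t\ge0$, where $\sigma_N$ is the minimum of the exit time of the $0$--extended process $\tilde X$ (cf.\ \eqref{f78}) from $B(0,N)$, the analogous exit time for $\tilde Y$, and the $N$-th terms of approximating sequences of $\tau^X$ and $\tau^Y$. This is obtained by rerunning the proof of Theorem \ref{main theorem}: on $\{s<\sigma_N\}$ one has $B(X_s)=B_N(X_s)$ and $B(Y_s)=B_N(Y_s)$, so the localized mild equations give the It\^o--type identity \eqref{e22} for $u^{(i)}_N$ up to $\sigma_N$ (Theorem \ref{p4} and its auxiliary process \eqref{xht} are built precisely to handle such a stopping time), and the ensuing Gronwall estimate together with the finiteness of the integral in Proposition \ref{stop} --- whose proof only involves the auxiliary process \eqref{r8} and Girsanov --- carry over with the stopped drift and the auxiliary process formed from $\tilde X,\tilde Y$. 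Granting this, let $N\to\infty$: since $X$ and $Y$ are continuous on their domains they are bounded on each $[0,t]$ with $t<\tau^X$ (resp.\ $t<\tau^Y$), so $\sigma_N>t$ for $N$ large; hence $\bigcup_N[0,\sigma_N)=[0,\tau^X\wedge\tau^Y)=[0,\tau)$ and $X=Y$ on $[0,\tau)$, $\P$--a.s. Taking the intersection of the (countably many) $\mu$--null exceptional sets arising here and in the existence part yields the statement for $\mu$--a.e.\ $x$.

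\emph{Main obstacle.} The essential difficulty is exactly this passage from the bounded--drift results, stated for \emph{global} solutions, to local ones: a local solution cannot simply be extended to a global solution of the truncated equation, since stopping a process destroys the mild equation. One therefore has to carry the auxiliary--process/Girsanov device of \eqref{xht} and \eqref{r8} through the whole proof of Theorem \ref{main theorem} in its stopped form, as sketched above (alternatively, one may concatenate the stopped auxiliary process with the strongly well-posed equation \eqref{ss} after the exit time and then invoke \cite{DFPR} directly). The remaining ingredients --- accessibility and monotonicity of $\tau=\sup_N\tau_N$, right-continuity of $(\bar{\mathcal F}_t^W)$ so that exit times are stopping times, and $\lim_N\sigma_N=\tau^X\wedge\tau^Y$ from path continuity on compact subintervals --- are routine.
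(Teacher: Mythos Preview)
Your proposal is correct and follows essentially the same route as the paper: construct the local strong solution by patching the global strong solutions $X_N$ of the truncated equations via Theorem \ref{main theorem}, take $\tau=\sup_N\tau_N$, and for uniqueness rerun the argument of Theorem \ref{main theorem} (in particular Theorem \ref{p4} and Proposition \ref{stop}) with $\tau_N^X$ replaced by a stopping time that also caps at the approximating sequence of the accessible lifetime. The only cosmetic difference is that the paper keeps two indices separate --- $n$ for the approximating sequence of $\tau^X,\tau^Y$ and $N$ for the ball radius, working on $[0,\eta\wedge T\wedge\tau_{n,N}^X\wedge\tau_{n,N}^Y]$ and first letting $N\to\infty$, then $n\to\infty$ --- whereas you tie them together into a single $\sigma_N$; both organizations are equivalent.
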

\begin{proof} Let us sketch some of the details of the proof.

\smallskip
From the first part of the proof of Theorem 16
 (see also Theorem 1 in \cite{DFPR}), given a-priori a filtered
probability space $\left(  \Omega,\mathcal{F},(\mathcal{F}_{t}),\P\right)  $ and
a  cylindrical $({\cal F}_t)-$Wiener process
$W$, we have the existence of a local strong mild solution $X_N$
on $[0,\tau_{N})$, for every $N\in\mathbb{N}$.  Note that each
 $\tau_{N}$ is accessible since as approximating sequence we may take
 $\tau_{N,n} = \inf \{t \ge 0 \; :\; X_N(t) \not \in B(0,N - \frac{1}{n})\} \wedge n$,
 for $n \ge 1$.

 Thus, taking $\tau
=\sup_{N\in\mathbb{N}}\tau_{N}$,  $\tau$ is accessible and we have
a local strong mild solution on
$[0,\tau)$.

\smallskip In order to prove uniqueness, we first note that if $X$ is a  local weak mild solution defined on  $[0,\tau^{X})$ and  $(\tau_n^X)$ is an approximating sequence of $\tau^X$ as in the definition of solution, then assertion \eqref{e22} of Theorem \ref{p4} holds, for any $t>0$,  $n \ge 1,$ $\mathbb \P\mbox{\rm
-a.s.}$, on the event  $\{ t \le \tau_{n,N}^X \}$, where
$\tau_{n,N}^X$ is the stopping time
$$
\tau_{n,N}^X = \inf \{t \in [0, \tau^X) \; :\; X_t \not \in B(0,N)\} \wedge
\tau_n^X
$$
 ($\tau_{n,N}^{X}= \tau_n^X$ if the set is empty; to show that $\tau_{n,N}^X$ is a stopping time, note that $\tau_{n,N}^X = \inf \{t \ge 0 \; :\; \tilde X_t \not \in B(0,N)\}$ $\wedge
\, \tau_n^X$, where $\tilde X_t$ is defined in \eqref{f78} with $\tau$ replaced by
 $\tau^X$).

Indeed, one can repeat the arguments in the proof of Theorem \ref{p4} with the same functions $u_N$,  replacing  $\tau_N^X$ with $\tau_{n,N}^{X}
$.

\smallskip
 Now let  $X$ and $Y$ be two local weak mild solutions  as in the second part of the theorem.

If $(\tau_n^X)$ and $(\tau_n^Y)$ are, respectively, approximating sequences of  $\tau^X$ and $\tau^Y$ as in the definition of solution, then in order to prove  uniqueness, it is enough to consider $\sigma_n = \tau^{X}_n\wedge\tau^{Y}_n$ and check that, $\P$-a.s.,
\begin{equation}
X =Y \;\; \text{on} \;\; [0, \sigma_n],\;\; n \ge 1.
\end{equation}
Let us fix $n \ge 1$. We can adapt  the proof of Theorem \ref{main theorem}, arguing on the interval
$[0, \eta \wedge T \wedge \tau_{n,N}^X \wedge \tau_{n,N}^Y]$ (cf. \eqref{r88}).
  We finally get that
 $X =Y $ on $[0, T \wedge \sigma_n]$, $T>0$, and this gives the assertion.
\end{proof}

\vskip 4mm
\noindent
\textbf{Acknowledgement.}
The authors would like to thank the  referees
 for their
 useful remarks and suggestions.

\end{document}